\newtheorem{thm}{Theorem}
\newtheorem{lem}{Lemma}
\newtheorem{prop}[lem]{Proposition}
\newtheorem{cor}[lem]{Corollary}
\newtheorem{defn}[lem]{Definition}
\newtheorem{ex}[lem]{Example}
\DeclareMathOperator{\cov}{cov}
\newcommand{\R}{\mathbb{R}}
\newcommand{\N}{\mathbb{N}}
\newcommand{\al}{\alpha}
\newcommand{\be}{\beta}
\newcommand{\Z} {\mathbb{Z}}
\renewcommand{\leq}{\leqslant}
\renewcommand{\geq}{\geqslant}
\begin{document}

\baselineskip=15pt

\title[Discrete harmonic polynomials in the orthant]{Discrete harmonic polynomials in multidimensional orthants}

\date{\today}

\author{Emmanuel Humbert}

\address{Université de Tours, CNRS, Institut Denis Poisson, Tours, France. {\em Email}: {\tt emmanuel.humbert@univ-tours.fr}}

\author{Kilian Raschel} 

\address{Université d'Angers, CNRS, Laboratoire Angevin de Recherche en Mathématiques, Angers, France {\em Email}: {\tt raschel@math.cnrs.fr}}

\keywords{Discrete harmonic functions; Random walks in cones; Harmonic polynomials; Reflection groups; Coxeter groups}

\subjclass[2020]{60G50; 20F55; 39A12; 39A60}

\begin{abstract} 
We consider multidimensional random walks in pyramidal cones (or multidimensional orthants), which are intersections of a finite number of half-spaces. We explore the connection between the existence of (positive)\ discrete harmonic polynomials for the random walks, with Dirichlet conditions on the boundary of the cone, and geometric properties of the cone, being or not the Weyl chamber of a finite Coxeter group. We prove that the first property implies the second, derive the converse in dimension two and show in this case that it coincides with the probabilistic harmonic function. 
\end{abstract}

\maketitle

\section{Introduction and main results}
\label{sec:intro}

In this paper we consider random walks $(S(n))_{n\geq 0}$ in multidimensional cones $C\subset \mathbb R^d$, with $d\geq 2$. Our random walks $S(n)=X(1)+\cdots +X(n)$ are defined as follows: $X(1),\ldots,X(n)$ are independent, identically distributed random variables with values in a given finite set $\mathcal S\subset \mathbb R^d$. The distribution of $X(i)$ is as follows: for any $s\in\mathcal S$, $\mathbb P(X(i)=s)=a(s)$, where the weights or transition probabilities $a(s)$ are non-negative and such that 
\begin{align} 
\label{eq:normalisation}
&\sum_{s\in \mathcal S} a(s) =1 &(\text{normalization})\\
\label{sum_s}
&\sum_{s \in \mathcal S} a(s) s =0 &(\text{zero drift})\\
\label{sum_sisj}
&\sum_{s \in \mathcal S} a(s) s_i s_j =\delta_{i,j} &(\text{identity covariance matrix})
\end{align}
where in the latter equation $(s_1,\ldots,s_d)$ denote  the coordinates of $s \in \mathcal S$ in the canonical basis of $\R^d$, $\delta_{i,j}$ is the classical Kronecker symbol, and $i,j \in \{1,\ldots,d\}$.
The random walk started at $x\in C$ is simply $(x+S(n))_{n\geq 0}$.
The discrete Laplacian $L_\mathcal S$ is defined for any function $f:\mathbb R^d\to\mathbb R$ by 
\begin{equation}
    \label{eq:discrete_Lap}
    L_\mathcal S(f)(x) := \sum_{s \in \mathcal S} a(s) f(x+s) -f(x).
\end{equation}
A function $f$ is said to be (discrete)\ harmonic on a domain $D$ if for all $x\in D$, $L_\mathcal S(f)(x)=0$.

When studying random walks in cones $C\subset \mathbb R^d$ (meaning random walks killed when exiting the cone $C$), it is natural to consider (positive)\ discrete harmonic functions with Dirichlet boundary conditions on the boundary of the cone. 
Indeed, given such a harmonic function, one can use the classical Doob transformation to define a version of the random walk conditioned to stay in the cone. Defining such conditioned random walks is generally difficult, since a more direct approach based on the first exit time $\tau_C$ from the cone would imply conditioning on the zero probability event $\{\tau_C=\infty\}$. In another direction, in the context of Brownian motion, a crucial role is played by the réduite of the cone which, by definition, is the unique positive harmonic function in the kernel of the Laplacian operator $\Delta=\sum_{i=1}^{d}\frac{\partial^2}{\partial x_i^2}$ with Dirichlet boundary condition (uniqueness holds up to positive multiples).
As proved in \cite{DB-86,BaSm-97}, the réduite allows to estimate the (Brownian)\ survival probability in the cone and the heat kernel of the cone. Returning to random walks, in \cite{Va-99} Varopoulos derives several estimates of the survival probability of the random walk in the cone in terms of the réduite. The paper \cite{Va-99} also gives upper and lower bounds on the local probability $\mathbb P(x+S(n)=y,\tau_C>n)$ in terms of the réduite, where $x,y\in C$ and $\tau_C$ is the first exit time from the cone for the random walk started at $x$. In \cite{DeWa-15}, Denisov and Wachtel turn these upper and lower bounds into exact asymptotic statements using the discrete harmonic function. As in the continuous setting, one can indeed speak of \textit{the} discrete positive harmonic function, since it has been proved in \cite{DuRaTaWa-22}, using Martin boundary theory, that there is uniqueness of such a function (up to positive multiples). As shown in \cite{DeWa-15}, the leading term of the discrete harmonic function is harmonic in the sense of the classical Laplacian operator $\Delta$, but there is no reason in general that the discrete and continuous functions coincide.

In this paper we will consider a particular family of cones given by (convex)\ multidimensional orthants 
\begin{equation}
    \label{eq:def_orthant}
    C =Z(\R_+^d)
\end{equation} 
where $Z$ is an automorphism of $\R^d$. In dimension $d=2$ they are wedges, and in any dimension they are called pyramids. Our main motivation for looking at this subclass of cones is that they appear in many concrete situations. First, in enumerative combinatorics it is known that many models are naturally in bijection with walks in the quarter plane \cite{BMMi-10}, and more generally with lattice walks in multidimensional orthants $\mathbb Z_+^d$, see \cite{BoBMKaMe-16}. Note that the role of the transformation $Z$ in the definition \eqref{eq:def_orthant} of the cone is to transform the covariance matrix of the random walk into the identity matrix (as in \eqref{sum_s} and \eqref{sum_sisj}), in the domain of attraction of a standard Brownian motion; in this way the classical orthant $\R_+^d$ is transformed into the new cone $C =Z(\R_+^d)$. Second, there are several famous models that naturally reduce to random walks in orthants $C$ as in \eqref{eq:def_orthant}: non-colliding (or non-intersecting, or ordered) random walks and random walks in Weyl chambers, which appear in physics \cite{Fi-84}, combinatorics \cite{GeZe-92,GrMa-93,Fe-14,Fe-18,MiSi-20}, probability theory \cite{KoOCRo-02,EiKo-08,KoSc-10,DeWa-10,De-18}.

Now that we have defined the random walks and cones considered in this paper, we can introduce our main object of study, namely discrete polynomial harmonic functions. Although they haven't been studied much in the literature, polynomial harmonic functions contain a lot of structure. Indeed, in some examples they are related to representation theory and can be expressed in terms of the dimensions of certain representations \cite{Bi-91,Ra-11}; in other examples they are equal to the Vandermonde determinant \cite{EiKo-08}; they are related to conformal mappings in \cite{Ra-14}, etc. From a more combinatorial viewpoint, polynomial harmonic functions appear in the asymptotics of orbit-summable quadrant walks \cite{CoMeMiRa-17,Ne-24}.
We will study here discrete harmonic polynomials which vanish on $\partial C$, the boundary of $C$, and which are asymptotically positive on $C$, in the following sense:
\begin{defn}
\label{def:asymp_pos}
A polynomial $P$ is said to be {\em asymptotically positive} on $C$ if its dominant term is positive on $C$. For short, a polynomial which is discrete harmonic and asymptotically positive on $C$ will be called  an {\em ${\text{a}}^+$-polynomial on $C$}.
\end{defn}

  Our main result is the following (for classical definitions and facts on Coxeter groups, we refer to \cite{Hu-90}):

\begin{thm}
\label{harmonic0}
If there exists an ${\text{a}}^+$-polynomial $P$ on  $C$, then $P$ is unique and $C$ is a Weyl chamber of a finite Coxeter group. If $d=2$ and $C$ is a Weyl chamber of a finite Coxeter group (i.e., if the interior angle of $C$ has the form $\pi/n$ for some positive integer $n$), then there exists an ${\text{a}}^+$-polynomial on  $C$.  
\end{thm}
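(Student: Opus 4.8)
The plan is to treat both implications through one mechanism: the interaction between the discrete Laplacian $L_{\mathcal S}$ and the classical Laplacian $\Delta$ supplied by the moment conditions \eqref{sum_s}--\eqref{sum_sisj}. Since $P$ is a polynomial, the Taylor expansion $P(x+s)=\sum_{\alpha}\frac{s^{\alpha}}{\alpha!}\partial^{\alpha}P(x)$ is exact and finite, so
\begin{equation*}
L_{\mathcal S}(P)=\sum_{|\alpha|\geq 1}\frac{1}{\alpha!}\Bigl(\sum_{s\in\mathcal S}a(s)\,s^{\alpha}\Bigr)\partial^{\alpha}P .
\end{equation*}
The terms with $|\alpha|=1$ vanish by the zero-drift condition \eqref{sum_s}, those with $|\alpha|=2$ collapse to $\tfrac12\Delta P$ by the identity-covariance condition \eqref{sum_sisj}, and the rest form an operator $R$ of order $\geq 3$. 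Thus $L_{\mathcal S}=\tfrac12\Delta+R$ with $R$ lowering the degree by at least $3$. Writing $P=P_m+\cdots+P_0$ in homogeneous parts and comparing top degrees in $L_{\mathcal S}(P)=0$ yields $\Delta P_m=0$: the dominant term of any $a^+$-polynomial is classically harmonic. As $P$ vanishes on the cone $\partial C$, each homogeneous part vanishes on $\partial C$, so $P_m$ is a homogeneous harmonic polynomial, positive on $C$ and vanishing on $\partial C$ — the polynomial réduite.

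For the forward implication I would extract the geometry from $P_m$ alone. The walls of $C=Z(\R_+^d)$ lie in $d$ hyperplanes $H_1,\dots,H_d$ through the origin, and since $P_m$ vanishes on a full-dimensional piece of each $H_i$ it vanishes on all of $H_i$. Let $\sigma_i\in O(d)$ be the Euclidean orthogonal reflection in $H_i$. The key step is $P_m\circ\sigma_i=-P_m$: indeed $-P_m\circ\sigma_i$ is harmonic and shares with $P_m$ both its (vanishing) values and its normal derivative along $H_i$, so the two agree by the Schwarz odd-reflection / Cauchy-data uniqueness for harmonic functions. Hence $P_m\circ w=\det(w)\,P_m$ for the group $W=\langle\sigma_1,\dots,\sigma_d\rangle\leq O(d)$, and $P_m$ vanishes on the fixed hyperplane of \emph{every} reflection in $W$. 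As $P_m\neq 0$ has at most $m$ linear factors, $W$ has at most $m$ reflecting hyperplanes; $W$ then permutes the finite spanning set of their unit normals and injects into the symmetric group on that set, so \textbf{$W$ is finite} — this finiteness is the crux of the whole argument. Finally, positivity of $P_m$ on the interior of $C$ forbids any reflecting hyperplane of $W$ from meeting that interior, so this interior is a connected component of the complement of the reflection arrangement bounded exactly by the $H_i$; thus $C$ is a (closed) chamber of $W$, i.e.\ a Weyl chamber of a finite Coxeter group.

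Uniqueness follows from the same degree analysis. If $P,P'$ are two $a^+$-polynomials, their dominant terms are positive harmonic homogeneous polynomials vanishing on $\partial C$; by uniqueness of the réduite (recalled in the introduction as the minimal Dirichlet eigenfunction on $C\cap S^{d-1}$) they have equal degree and agree up to a positive scalar. After rescaling so the dominant terms coincide, $D=P-P'$ is discrete harmonic, vanishes on $\partial C$, and has degree $<m$; its dominant term would be a nonzero harmonic homogeneous polynomial vanishing on $\partial C$ of degree $<m$, contradicting minimality of the réduite exponent. Hence $D=0$ and $P$ is unique up to a positive multiple.

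For the converse in $d=2$, I would build $P$ from its prescribed dominant term by descending induction, using that $C$ is a wedge of angle $\pi/n$. Placing the walls at polar angles $0$ and $\pi/n$, take $P_n$ proportional to $r^{n}\sin(n\psi)=\Ima(z^{n})$ (after the aligning rotation): harmonic, positive on $C$, vanishing on $\partial C$. Solving $L_{\mathcal S}(P)=0$ degree by degree, the degree-$k$ component reads $\tfrac12\Delta P_{k+2}=-(RP)_{k}$, whose right-hand side depends only on the already-constructed higher parts; so one must solve $\Delta P_{j}=g$ with $P_{j}$ vanishing on both walls. Such a polynomial is $\ell_1\ell_2\,Q$, and in $\R^2$ the map $\Delta\colon \ell_1\ell_2\,\mathcal P_{j-2}\to\mathcal P_{j-2}$ is between spaces of equal dimension; it is bijective exactly when there is no nonzero harmonic homogeneous polynomial of degree $j$ vanishing on both walls, i.e.\ exactly when $n\nmid j$. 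Since only degrees $2\leq j\leq n-1$ occur below the leading term, none divisible by $n$, each Poisson problem has a unique wall-vanishing solution; the boundary condition forces $P_1=P_0=0$. The resulting $P=P_n+\cdots+P_2$ is discrete harmonic, vanishes on $\partial C$, and has dominant term $P_n>0$, hence is an $a^+$-polynomial. \textbf{The main obstacle} is precisely this step-by-step solvability: the arithmetic resonance $n\mid j$ is what could obstruct the construction, and the Weyl-chamber hypothesis $\theta=\pi/n$ is exactly what confines all resonances to the degrees $n,2n,\dots$, leaving the sub-leading cascade nondegenerate.
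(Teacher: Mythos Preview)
Your argument is correct, and the backbone---the decomposition $L_{\mathcal S}=\tfrac12\Delta+R$ with $R$ lowering degree by at least $3$, and the conclusion that the dominant term of any $a^+$-polynomial is the classical r\'eduite---is exactly the paper's Lemmas~\ref{Ls=} and~\ref{dominant_term}. The two implications, however, are organised differently.

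For the forward direction the paper stops once it knows the dominant term $D$ is harmonic, positive on $C$ and zero on $\partial C$, and then quotes \cite[Thm~14]{GoHuRa-25} (a nodal-domain argument on $S^{d-1}$) to obtain finiteness of the reflection group and that $C$ is a chamber. Your route is self-contained: Schwarz reflection gives $P_m\circ\sigma_i=-P_m$, antisymmetry forces $P_m$ to vanish on every reflecting hyperplane of $W$, the bound $\#\{\text{reflecting hyperplanes}\}\leq\deg P_m$ then makes the root system finite, and faithfulness of the action on the normals yields $|W|<\infty$. This is more elementary and makes the present paper independent of \cite{GoHuRa-25} at that step; the trade-off is that the paper's spectral route would still work for a positive harmonic \emph{function} that is not a priori polynomial. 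For uniqueness the two proofs are essentially the same idea phrased differently: the paper shows explicitly that the dominant term of $P-P'$ is divisible by $P_0$, whereas you invoke that the r\'eduite degree is the smallest Dirichlet exponent on $C\cap S^{d-1}$.

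For the converse in $d=2$ the paper argues in one stroke on the non-homogeneous space $V=\{Q\in\R_{r-1}[X_1,X_2]:P_1\mid Q\}$: Lemma~\ref{L_inj} gives injectivity of $L_{\mathcal S}\colon V\to\R_{r-3}[X_1,X_2]$, and the single dimension equality $\dim V=\dim\R_{r-3}$ (valid only when $d=2$) yields surjectivity and hence an $R$ with $L_{\mathcal S}(P_0+R)=0$. Your degree-by-degree descent is the homogeneous unpacking of the same count: each map $\Delta\colon\ell_1\ell_2\,\mathcal P_{j-2}\to\mathcal P_{j-2}$ is between $(j{-}1)$-dimensional spaces and is bijective precisely when $n\nmid j$, which holds for all $2\leq j\leq n-1$. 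Your version makes the arithmetic obstruction $n\mid j$ visible; the paper's version makes immediately clear why the argument breaks for $d\geq 3$, since then $\dim\R_{r-d-1}\neq\dim\R_{r-3}$.
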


Let us make three important remarks about Theorem~\ref{harmonic0}.
First, suppose there exists a polynomial $P$ as in Theorem~\ref{harmonic0} which is not positive on $C$. Corollary~\ref{P_harm_H_finite} then implies that there is no positive discrete harmonic polynomial on $C$ which is vanishing on the boundary.

\medskip

Second, while Theorem~\ref{harmonic0} is clearly motivated by random walk theory and is stated using a Laplacian operator \eqref{eq:discrete_Lap} which is naturally associated with a random walk, it could alternatively and equivalently be stated as a result of discrete analysis, in the tradition of the early work of the theory, see e.g.\ Ferrand \cite{Fe-44}, Murdoch \cite{Mu-65} and Duffin \cite{Du-68}.

\medskip

Third, let us stress that our assumptions \eqref{eq:normalisation}, \eqref{sum_s} and \eqref{sum_sisj} are not restrictive. Indeed, let us consider arbitrary random walks restricted to a cone $C$, with associated jumps (or transition probabilities)\ that satisfy \eqref{eq:normalisation} and \eqref{sum_s} (if the drift turns out to be non-zero, one can easily perform a Cram\'er transformation to turn the walk into a zero-drift one). Now define $\cov$ as the covariance matrix of the associated random walk (see \eqref{covmatrix} for a precise definition). It is a positive definite and symmetric matrix.  So we can define $Z= \cov^{-\frac12}$, which defines an automorphism of $\R^d$. Then we introduce $\mathcal S'= Z(\mathcal S)$ and $C'=Z(C)$ (as in \eqref{eq:def_orthant} if the initial cone is the positive orthant $\mathbb R_+^d$). It is proved in \cite{DeWa-15} that the new step set $\mathcal S'$ satisfies the relations \eqref{eq:normalisation}, \eqref{sum_s} and \eqref{sum_sisj}. In other words, up to a linear transformation (which also affects the cone), we can always assume that \eqref{eq:normalisation}, \eqref{sum_s} and \eqref{sum_sisj} are satisfied.

\medskip

In connection with the above observation, we can now state a version of Theorem~\ref{harmonic0} in the special case of the cone $C=\mathbb R_+^d$. This statement is particularly useful in the combinatorial setting introduced in \cite{BMMi-10,BoBMKaMe-16}. In these cases the step set $\mathcal S$ is often supposed to have small steps, i.e., $\mathcal S \subset \{-1,0,1\}^d$. See Figure~\ref{fig:step_sets_2D} for four  illustrations of Theorem~\ref{harmonic}.
\begin{thm} 
\label{harmonic}
If there exists an ${\text{a}}^+$-polynomial for the discrete Laplacian operator \eqref{eq:discrete_Lap}  on  $\R_+^d$
and  $C$ is the multidimensional orthant as in \eqref{eq:def_orthant}, with $Z=\cov^{-\frac{1}{2}}$, then $C$ is a Weyl chamber of a finite Coxeter group. Conversely, if $d=2$ and $C=\cov^{-\frac{1}{2}}(\mathbb R_+^2)$ is a Weyl chamber of a finite Coxeter group (i.e., if the interior angle of $C$ has the form $\pi/n$ for some positive integer $n$), then there exists an ${\text{a}}^+$-polynomial on $\R_+^2$. 
\end{thm}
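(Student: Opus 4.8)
The plan is to deduce Theorem~\ref{harmonic} directly from Theorem~\ref{harmonic0} by means of the linear change of variables $Z=\cov^{-\frac12}$. As recalled in the third remark following Theorem~\ref{harmonic0} and proved in \cite{DeWa-15}, starting from a walk on $\R_+^d$ whose weights satisfy \eqref{eq:normalisation} and \eqref{sum_s}, the pushed-forward step set $\mathcal S'=Z(\mathcal S)$, equipped with the weights $a'(Zs)=a(s)$, satisfies all three relations \eqref{eq:normalisation}, \eqref{sum_s} and \eqref{sum_sisj} on the transformed cone $C=Z(\R_+^d)$. Thus Theorem~\ref{harmonic0} is applicable to the pair $(C,\mathcal S')$, and it remains only to transport the notion of an ${\text{a}}^+$-polynomial between $(\R_+^d,\mathcal S)$ and $(C,\mathcal S')$.

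First I would set up the dictionary between the two Laplacians. For any $f\colon\R^d\to\R$ and any $x$, writing $y=Zx$, a direct computation using $Z^{-1}(Zx+Zs)=x+s$ gives
\begin{equation*}
L_{\mathcal S'}(f\circ Z^{-1})(Zx)=\sum_{s\in\mathcal S}a(s)\,f(x+s)-f(x)=L_{\mathcal S}(f)(x).
\end{equation*}
Hence $f$ is discrete harmonic for $L_{\mathcal S}$ on $\R_+^d$ if and only if $f\circ Z^{-1}$ is discrete harmonic for $L_{\mathcal S'}$ on $C$. Since $Z^{-1}$ is linear, the map $P\mapsto P\circ Z^{-1}$ sends polynomials to polynomials of the same degree, and it carries the dominant (top-degree homogeneous)\ part $P_{\mathrm{dom}}$ of $P$ to $P_{\mathrm{dom}}\circ Z^{-1}$; as $Z^{-1}(C)=\R_+^d$, the latter is positive on $C$ precisely when $P_{\mathrm{dom}}$ is positive on $\R_+^d$. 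Finally $Z(\partial \R_+^d)=\partial C$, so the Dirichlet condition is preserved as well. Consequently $P\mapsto P\circ Z^{-1}$ is a bijection between ${\text{a}}^+$-polynomials on $\R_+^d$ for $L_{\mathcal S}$ and ${\text{a}}^+$-polynomials on $C$ for $L_{\mathcal S'}$.

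With this correspondence in hand, both assertions follow at once. For the forward direction, an ${\text{a}}^+$-polynomial on $\R_+^d$ yields an ${\text{a}}^+$-polynomial on $C$ for $L_{\mathcal S'}$, and the first part of Theorem~\ref{harmonic0} then forces $C$ to be a Weyl chamber of a finite Coxeter group. For the converse in dimension $d=2$, if $C=\cov^{-\frac12}(\R_+^2)$ is a Weyl chamber (interior angle $\pi/n$), the second part of Theorem~\ref{harmonic0} produces an ${\text{a}}^+$-polynomial on $C$ for $L_{\mathcal S'}$, which pulls back under $P\mapsto P\circ Z$ to an ${\text{a}}^+$-polynomial on $\R_+^2$ for $L_{\mathcal S}$.

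The argument is essentially a translation, so I do not expect a genuine obstacle; the only point requiring care is the invariance computation above, namely checking that discrete harmonicity and the dominant homogeneous part are \emph{simultaneously} preserved under the non-orthogonal transformation $Z$. One should in particular confirm that pushing the weights forward by $a'(Zs)=a(s)$ is what makes $L_{\mathcal S'}$ conjugate to $L_{\mathcal S}$, and that the covariance of $\mathcal S'$ is indeed the identity so that Theorem~\ref{harmonic0} genuinely applies.
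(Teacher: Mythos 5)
Your proposal is correct and takes essentially the same approach as the paper: the authors regard Theorem~\ref{harmonic} as equivalent to Theorem~\ref{harmonic0} via precisely the linear change of variables $Z=\cov^{-\frac12}$ (see the third remark following Theorem~\ref{harmonic0}, citing \cite{DeWa-15} for the fact that $\mathcal S'=Z(\mathcal S)$ satisfies \eqref{eq:normalisation}--\eqref{sum_sisj}, and the phrase ``or equivalently of Theorem~\ref{harmonic}'' in their proof). You simply make explicit what the paper leaves implicit, namely the conjugation identity $L_{\mathcal S'}(f\circ Z^{-1})(Zx)=L_{\mathcal S}(f)(x)$ and the preservation of degree, dominant homogeneous part, positivity and the Dirichlet boundary condition under the invertible linear map $Z$.
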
 
 
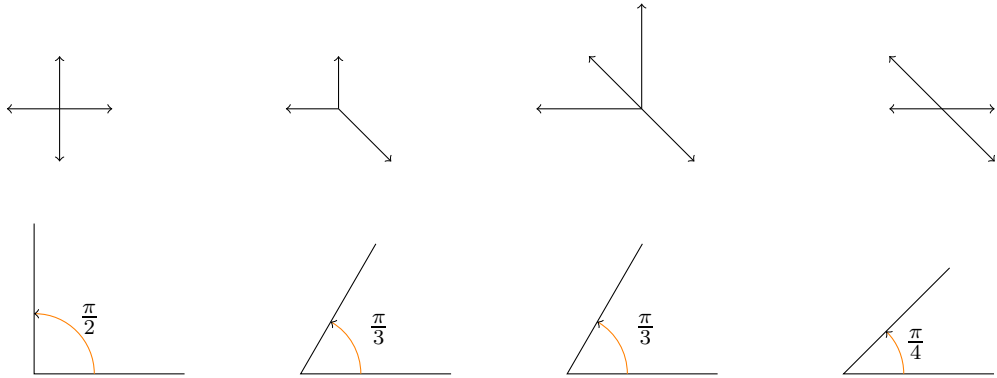
\begin{figure}[ht!]
\begin{center}
\begin{tikzpicture}[scale=.7] 
    \draw[->,white] (1,2) -- (0,-2);
    \draw[->,white] (1,-2) -- (0,2);
    \draw[->] (0,0) -- (1,0);
    \draw[->] (0,0) -- (-1,0);
    \draw[->] (0,0) -- (0,-1);
    \draw[->] (0,0) -- (0,1);
   \end{tikzpicture}\qquad\qquad\qquad
\begin{tikzpicture}[scale=.7] 
    \draw[->,white] (1,2) -- (0,-2);
    \draw[->,white] (1,-2) -- (0,2);
    \draw[->] (0,0) -- (-1,0);
    \draw[->] (0,0) -- (0,1);
    \draw[->] (0,0) -- (1,-1);
   \end{tikzpicture}\qquad\qquad\quad
\begin{tikzpicture}[scale=.7] 
  \draw[->,white] (1.5,2) -- (-2,-2);
    \draw[->,white] (1.5,-2) -- (-2,2);
        \draw[->] (0,0) -- (1,-1);
    \draw[->] (0,0) -- (-2,0);
    \draw[->] (0,0) -- (-1,1);
    \draw[->] (0,0) -- (0,2);
   \end{tikzpicture}\qquad\qquad
\begin{tikzpicture}[scale=.7] 
  \draw[->,white] (1.5,2) -- (-2,-2);
    \draw[->,white] (1.5,-2) -- (-2,2);
        \draw[->] (0,0) -- (1,0);
        \draw[->] (0,0) -- (-1,0);
    \draw[->] (0,0) -- (-1,1);
    \draw[->] (0,0) -- (1,-1);
   \end{tikzpicture}
  \end{center}
  \begin{center}
  \begin{tikzpicture}
  \draw
    (2,0) coordinate (a) 
    -- (0,0) coordinate (b) 
    -- (0,2) coordinate (c) 
    pic["$\frac{\pi}{2}$", draw=orange, ->, angle eccentricity=1.3, angle radius=0.8cm]
    {angle=a--b--c};
\end{tikzpicture}\qquad\qquad
\begin{tikzpicture}
  \draw
    (2,0) coordinate (a) 
    -- (0,0) coordinate (b) 
    -- (1,1.73) coordinate (c) 
    pic["$\frac{\pi}{3}$", draw=orange, ->, angle eccentricity=1.5, angle radius=0.8cm]
    {angle=a--b--c};
\end{tikzpicture}\qquad\qquad
\begin{tikzpicture}
  \draw
    (2,0) coordinate (a) 
    -- (0,0) coordinate (b) 
    -- (1,1.73) coordinate (c) 
    pic["$\frac{\pi}{3}$", draw=orange, ->, angle eccentricity=1.5, angle radius=0.8cm]
    {angle=a--b--c};
\end{tikzpicture}
\qquad\qquad
\begin{tikzpicture}
  \draw
    (2,0) coordinate (a) 
    -- (0,0) coordinate (b) 
    -- (1.41,1.41) coordinate (c) 
    pic["$\frac{\pi}{4}$", draw=orange, ->, angle eccentricity=1.3, angle radius=0.8cm]
    {angle=a--b--c};
\end{tikzpicture}
\end{center}
  \caption{Illustration of Theorem~\ref{harmonic}. Leftmost model: the function $P(i,j)=ij$ is harmonic for the simple random walk (with uniform probabilities $\frac14$). Second model (called tandem walk), with uniform probabilities $\frac13$: the polynomial $P(i,j)=ij(i+j)$ is discrete harmonic. Third model, with probabilities $a(1,-1)=\frac{1}{2}$ and the other probabilities are $\frac16$: the previous function is also harmonic. Rightmost model (with uniform probabilities $\frac14$): the function $P(i,j)=ij(i+j)(i+2j)$ is discrete harmonic. }
  \label{fig:step_sets_2D}
\end{figure}

We now want to   compare the harmonic functions appearing in our Theorems~\ref{harmonic0} and~\ref{harmonic} with a certain canonical discrete harmonic function useful in the theory of random walks in cones. As explained above, without restricting the generality, we can consider, as in the statement of Theorem~\ref{harmonic}, that $C= \R_+^d$ and that $\mathcal S$ satisfies the normalisation and zero drift assumptions \eqref{eq:normalisation} and \eqref{sum_s} but not necessarily \eqref{sum_sisj}.   Then Denisov and Wachtel proved in \cite[Eq.~(6)]{DeWa-15} that there exists a function $f$  satisfying the following harmonic equation, for all $x\in \R_+^d$ 
\begin{equation}
\label{eq:harm_rel_DW}
 f(x) = \mathbb E_{x}\bigl[f\bigl(x+X(1)\bigr),\tau_C >1\bigr],
\end{equation}
which is positive on $(0,\infty)^d$ and vanishing on the boundary. Here  $\tau_C$ denotes the first exit time from $C=\R_+^d$.  It turns out that $f$ is indeed unique, as proved in \cite[Thm~4]{DuRaTaWa-22}. 

\medskip

It is then natural to ask whether or not $f$ in \eqref{eq:harm_rel_DW} is equal to the harmonic function obtained in Theorem~\ref{harmonic} (if it exists). In a general setting this cannot be true: indeed, $f$ is defined on $\R_+^d$ and the equation \eqref{eq:harm_rel_DW} is equivalent to $L_\mathcal S(f)=0$ only if the function $f$ is extended by $0$ outside $\R_+^d$. So it is not polynomial. However, from a probabilistic point of view, when studying lattice random walks, only the values of $f$ in $\N_0^d$ are relevant (here and throughout we will denote $\N_0=\{0,1,2,\ldots\}$ and $\N=\{1,2,3,\ldots\}$). 

A natural question is whether it is true that $f=P$ on $\N^d$. Since both $P$ and $f$ vanish at the boundary, this question makes sense as soon as the random walk cannot jump over the boundary. More precisely, the last condition means that for $x \in \N^d$ it holds that $x +s \in \N_0^d$ for any $s \in \mathcal S$ or equivalently that all Cartesian coordinates of $s$ belong to $\{-1,0,1,2,\ldots\}$.  All examples in Figure~\ref{fig:step_sets_2D} (except the third) satisfy this property. If this condition does not hold, $f$ is obviously not polynomial.

Let us illustrate this fact on the   following example, taken from \cite{BMFuRa-20}. Considering the third model in the Figure~\ref{fig:step_sets_2D}, which does not satisfy the small negative jump hypothesis, it turns out that the function $P_0(i,j)=ij(i+j)$ is discrete harmonic on $\mathbb Z^2$. However, it is easy to verify that while it satisfies the harmonicity property \eqref{eq:harm_rel_DW} for almost all points, it does not satisfy it for $i=1$, so it is not the probabilistic harmonic function for this model. In fact
 by \cite[Eq.~(63)]{BMFuRa-20}, the function below turns out to be the only positive harmonic function that satisfies \eqref{eq:harm_rel_DW}:
\begin{equation*}
    f(i,j)=j\left(i(i+j)+\frac{i}{2}+\frac{j}{4}-\frac{1}{8}-\frac{2j-1}{8}\left(-\frac{1}{3}\right)^i\right).
\end{equation*}

Now let us return to the case where the random walk cannot jump the boundary.
Then we prove:
\begin{thm} \label{comparingfP}
Assume that  $d=2$ and $C:=\cov^{-\frac{1}{2}} \left(\R_+^d\right)$ is the Weyl chamber of a finite Coxeter group. Then, the harmonic function $f$ of \cite{DeWa-15} in \eqref{eq:harm_rel_DW} coincides on $\N^2$ with the harmonic polynomial given by Theorem \ref{harmonic}, up to a positive multiplicative constant. 
\end{thm}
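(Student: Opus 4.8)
The plan is to show that the restriction of $P$ to $\N^2$ is a positive solution of the Denisov--Wachtel harmonic relation \eqref{eq:harm_rel_DW}, and then to identify it with $f$ up to a positive constant by a limit argument built on the asymptotics of \cite{DeWa-15} (or, equivalently, on the uniqueness of \cite[Thm~4]{DuRaTaWa-22}). The starting point is that, since $\N^2\subset \R_+^2$ and $P$ is discrete harmonic on $C=\R_+^2$, one has $L_\mathcal S(P)(x)=0$, i.e.\ $P(x)=\sum_{s\in\mathcal S}a(s)P(x+s)$, for every $x\in\N^2$.

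First I would use the no-jump hypothesis. Because the walk cannot cross the boundary, every $s\in\mathcal S$ has all its coordinates in $\{-1,0,1,2,\dots\}$, so for $x\in\N^2$ one has $x+s\in\N_0^2\subset\R_+^2$ for all $s\in\mathcal S$; in particular no term $x+s$ has a negative coordinate. I would then split the sum according to whether $x+s$ lies in the interior $\N^2$ or on $\partial\R_+^2$, these being the only two possibilities. The boundary terms all vanish, since $P=0$ on $\partial C$, so $P(x)=\sum_{s:\,x+s\in\N^2}a(s)P(x+s)$. This is exactly the one-step relation \eqref{eq:harm_rel_DW}, so $P|_{\N^2}$ solves the same harmonic equation as $f$ and, like $f$, vanishes on the two axes.

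It then remains to identify $P|_{\N^2}$ with a multiple of $f$. Iterating \eqref{eq:harm_rel_DW} (valid by the Markov property, since on $\{\tau_C>k\}$ the walk stays in $\N^2$) gives $P(x)=\mathbb E_x[P(x+S(k)),\tau_C>k]$ for every $k$. I would write $P=P_n+\tilde P$, with $P_n$ the homogeneous dominant term and $\deg\tilde P\leq n-1$. Using the zero-drift condition \eqref{sum_s}, the top-degree part of $L_\mathcal S(P)=0$ reduces to $\sum_{i,j}\cov_{ij}\partial_{ij}P_n=0$, so after the change of variables $Z=\cov^{-\frac12}$ the polynomial $P_n$ is $\Delta$-harmonic in the wedge $C$ and positive inside; by uniqueness of the réduite $V$ of a wedge of angle $\pi/n$ it is a positive multiple of $V$, hence a positive multiple of the dominant term of $f$. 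Two inputs from \cite{DeWa-15} finish the argument: the identification of $f$ as the limit $\lim_k\mathbb E_x[V(x+S(k)),\tau_C>k]$, which gives $\mathbb E_x[P_n(x+S(k)),\tau_C>k]\to c\,f(x)$ with $c>0$; and the survival asymptotics $\mathbb P_x(\tau_C>k)\sim \kappa\,V(x)\,k^{-n/2}$ together with the moment bounds $\mathbb E_x[|x+S(k)|^m,\tau_C>k]=O(k^{m/2}\mathbb P_x(\tau_C>k))$, which yield $\mathbb E_x[\tilde P(x+S(k)),\tau_C>k]=O(k^{-1/2})\to 0$. Letting $k\to\infty$ in $P(x)=\mathbb E_x[P_n(x+S(k)),\tau_C>k]+\mathbb E_x[\tilde P(x+S(k)),\tau_C>k]$ then gives $P=c\,f$ on $\N^2$. (Alternatively, since each $\mathbb E_x[P_n(x+S(k)),\tau_C>k]\geq 0$ with positive limit, this first shows $P|_{\N^2}>0$, after which the uniqueness statement of \cite[Thm~4]{DuRaTaWa-22} applies to $P$ and $f$ directly.)

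The reduction of the previous two paragraphs (no-jump plus cancellation of boundary terms) is elementary; the main obstacle is the limiting step. One must justify the interchange of limit and expectation through uniform integrability, which rests on the moment estimates for the killed walk and the precise survival exponent $n/2$ attached to the wedge of angle $\pi/n$, and---most importantly---one must identify $\lim_k\mathbb E_x[P_n(x+S(k)),\tau_C>k]$ with $f$ itself rather than with some a priori unrelated harmonic function, which is precisely where the Denisov--Wachtel construction of $f$ as the limit of the réduite along the killed walk is used in an essential way.
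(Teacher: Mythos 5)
Your proposal is correct and follows essentially the same route as the paper: iterate the harmonicity relation $P(x)=\mathbb E_x[P(x+S(k)),\tau_C>k]$ (valid on lattice points thanks to the no-jump hypothesis and the vanishing of $P$ on the boundary), decompose $P$ into its dominant réduite part plus a lower-degree remainder, identify the limit of the dominant part with $f$ via \cite[Lem.~12]{DeWa-15}, and show that the remainder's expectation tends to $0$. The only difference is technical and minor: you control the remainder through the survival asymptotics and killed moment bounds, whereas the paper integrates the local estimate \cite[Thm~5]{DeWa-15} over the region $|y|\leq n^{1/2+\varepsilon}$ and its complement -- both inputs come from the same source and yield the same $O(k^{-1/2})$ decay.
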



We now recall the construction of a certain canonical (continuous) harmonic function $P_0$, mentioned as the réduite of the cone at the beginning of the introduction, which in certain cases can also be discrete harmonic, as we will explain in Theorem~\ref{P0harm} below. By definition, the walls of $C$ are  the $d$ hyperplanes 
 $$F_i = {\rm{span}}\bigl\{Z(e_1), \ldots,\widehat{Z(e_i)}, \ldots,Z(e_d)\bigr\},$$ 
where classically $(e_1,\ldots,e_d)$ denotes the canonical basis of $\mathbb R^d$, and the vector with a hat is absent from the set.
 Assume that $C$ is the Weyl chamber of a finite Coxeter group $H$. Let $\{r_1,\ldots,r_d\}$ be the reflections through the walls $F_i$ of $C$. Then $\{r_1,\ldots,r_d\}$ is a set of generators of $H$. 
 
For any hyperplane $F$ of $\R^d$, denote by $r_F$ the reflection through $F$.  Define further
 $$\Lambda:=\bigl\{F | F \hbox{ is a hyperplane of } \R^d \hbox{ such that } r_F \in H \bigr\}.$$
 Then $F_1, \ldots,F_d \in \Lambda$.
When $H$ is finite and $C$ is a Weyl chamber of $H$, then we let $F_{d+1},\ldots,F_r$ be such that 
$\Lambda=\{F_1,\ldots,F_r\}$. For each $i$, we choose a linear form $f_i$ such that 
\begin{equation}
    \label{eq:def_Fifi}
    F_i:=\ker{f_i}
\end{equation} and we set 
\begin{equation} 
\label{defP0}
   P_0= \prod_{i=1}^r f_i.
\end{equation}
Note that the polynomial $P_0$ depends on the choice of the $f_i$ up to a multiplicative constant. Without loss of generality, we can assume that $P_0>0$ on $C$. Note also that $P_0$ is homogeneous, in the sense that $P_0(\lambda x)=\lambda^r P_0(x)$ for any $x\in \mathbb R^d$  and $\lambda>0$. As explained in \cite{GoHuRa-25}, $P_0$ is harmonic for the classical Laplacian $\Delta$ and is antisymmetric with respect to $H$, i.e., satisfies for all $h \in H$ that
\begin{equation} 
\label{P0antisym}
   P_0 \circ h = \det(h) P_0.
\end{equation} 
The polynomial $P_0$ is a natural candidate for the discrete harmonic polynomial given by Theorem~\ref{harmonic0}. More generally, a natural question is whether or not $P_0$ is a discrete harmonic polynomial.  Obviously this is not true in general (see for example \cite{Ra-11}), but 
we show that:
\begin{thm} 
\label{P0harm} 
Assume that the set $\mathcal S$ and the associated weights $a(s)$  are invariant under the action of $H$ (i.e., $h\mathcal S=\mathcal S$ and $a(hs)=a(s)$ for all $h \in H$ and $s \in \mathcal S$), then $P_0$ in \eqref{defP0} is discrete harmonic. 
\end{thm}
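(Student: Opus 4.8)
The plan is to expand $L_{\mathcal S}(P_0)$ through a finite Taylor expansion, show on one hand that it is a polynomial of degree strictly less than $\deg P_0 = r$, and on the other hand that it inherits the anti-invariance \eqref{P0antisym} of $P_0$. A degree count against the classical structure of anti-invariant polynomials then forces it to vanish, which is exactly the asserted discrete harmonicity.

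First I would write, for each $x$,
\[
L_{\mathcal S}(P_0)(x) = \sum_{s\in\mathcal S} a(s)\, P_0(x+s) - P_0(x) = \sum_{k\geq 1}\frac{1}{k!}\sum_{s\in\mathcal S}a(s)\,(s\cdot\nabla)^k P_0(x),
\]
the sum over $k$ being finite since $P_0$ is a polynomial. The $k=0$ term equals $P_0(x)$ by the normalization \eqref{eq:normalisation} and cancels the $-P_0(x)$; the $k=1$ term vanishes by the zero-drift condition \eqref{sum_s}; and the $k=2$ term equals $\tfrac12\Delta P_0(x)$ by the identity-covariance condition \eqref{sum_sisj}, hence vanishes because $P_0$ is harmonic for the classical Laplacian. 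Since $(s\cdot\nabla)^k P_0$ has degree $r-k$, what remains has degree at most $r-3$, and in particular strictly less than $r$.

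Next I would check that $g := L_{\mathcal S}(P_0)$ is anti-invariant, i.e.\ $g\circ h = \det(h)\, g$ for every $h\in H$. Writing $P_0(hx+s) = \det(h)\,P_0(x + h^{-1}s)$ via \eqref{P0antisym} and then substituting $s\mapsto hs$, the hypotheses $h\mathcal S = \mathcal S$ and $a(hs)=a(s)$ turn $\sum_s a(s)P_0(x+h^{-1}s)$ back into $\sum_s a(s)P_0(x+s)$, which yields $g(hx) = \det(h)\,g(x)$. This is the one place where the $H$-invariance of the weighted step set is used, and I expect it to be the heart of the argument. I would then invoke the standard reflection-group fact (see \cite{Hu-90}) that any $H$-anti-invariant polynomial is divisible by $P_0$: for each reflecting hyperplane $F_i=\ker f_i$ one has $g\circ r_{F_i}=-g$ while $r_{F_i}$ fixes $F_i$ pointwise, so $g$ vanishes on $F_i$ and $f_i\mid g$; as the $f_i$ are pairwise non-proportional linear forms, $P_0=\prod_{i=1}^r f_i$ divides $g$. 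A nonzero multiple of $P_0$ has degree at least $r$, contradicting $\deg g \leq r-3 < r$ from the previous step; hence $g=0$.

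The main obstacle is conceptual rather than computational: the analytic degree drop, powered by the moment conditions together with the $\Delta$-harmonicity of $P_0$, must be matched against the algebraic rigidity of anti-invariants, and the two ingredients live in different worlds. The only delicate verification is the change of variables proving the anti-invariance of $g$, where both invariance hypotheses on $(\mathcal S,a)$ genuinely enter; everything else reduces to a routine expansion or to a citation from the theory of finite Coxeter groups.
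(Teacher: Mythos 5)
Your proof is correct, and it takes a route that differs from the paper's in its key intermediate step, so a comparison is worthwhile. The paper works with $P(x)=L_{\mathcal S}(P_0)(x)+P_0(x)=\sum_{s\in\mathcal S}a(s)P_0(x+s)$, notes via Lemma~\ref{Ls=} that its dominant term is $P_0$, and then proves \emph{pointwise} that $P$ vanishes on each reflecting hyperplane $F_i$: it fixes $x\in F_i$, averages over the group, splits $H=H_0\cup H_0r_i$ with $H_0=\ker\det$, and pairs $h$ with $h\circ r_i$ to cancel terms using $r_i x=x$ and \eqref{P0antisym}; divisibility by $P_0=\prod_{i=1}^r f_i$ then forces $P=P_0$. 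You instead work directly with $g=L_{\mathcal S}(P_0)$, bound $\deg g\leq r-3$ (your Taylor computation is exactly Lemma~\ref{Ls=} combined with $\Delta P_0=0$), and prove the \emph{global} equivariance $g\circ h=\det(h)\,g$ for every $h\in H$ by the single change of variables $s\mapsto hs$; the vanishing of $g$ on each $F_i\in\Lambda$ (note that you correctly get all $r$ hyperplanes, not just the $d$ walls, since each $r_{F_i}\in H$) then drops out from $g(x)=g(r_ix)=-g(x)$, and the same divisibility-plus-degree endgame gives $g=0$. The two arguments rest on identical pillars --- invariance of the weighted step set, anti-symmetry \eqref{P0antisym}, and unique factorization against the product of pairwise non-proportional linear forms --- and the paper's coset-pairing computation is in effect a symmetrized version of your one-line substitution. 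What your version buys is a cleaner and reusable structural statement, namely that $L_{\mathcal S}$ maps $H$-anti-invariant polynomials to $H$-anti-invariant polynomials whenever $(\mathcal S,a)$ is $H$-invariant; amusingly, this is closer in spirit to the argument the paper itself uses for the uniqueness part of Corollary~\ref{P_harm_H_finite} (where anti-invariance of a harmonic remainder is established hyperplane by hyperplane) than to the paper's own proof of Theorem~\ref{P0harm}. What the paper's version buys is that it never needs the full group action on $g$, only the cancellation at points of the fixed hyperplane, at the cost of the heavier averaging bookkeeping.
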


As a first illustration of Theorem~\ref{P0harm}, the function $P_0$ is discrete harmonic for all models represented on Figure~\ref{fig:step_sets_2D}. As a second illustration, consider the rightmost step set on Figure~\ref{fig:step_sets_2D}, with the following transition probabilities
\begin{equation*}
    a(1,0)=a(-1,0) = \frac{\sin^2(\frac{\pi}{n})}{2}\quad \text{and}\quad a(1,-1)=a(-1,1) = \frac{\cos^2(\frac{\pi}{n})}{2},
\end{equation*}
where $n\geq 3$ is a fixed parameter. Theorem~\ref{harmonic} entails that there exist  harmonic polynomials $P_n$ if and only if $n$ is an integer. Moreover, a family of discrete harmonic polynomials  $P_n$ is constructed in \cite{Ra-11}: 
\begin{equation*}
    \left\{\begin{array}{rcl}
    P_3(i,j) & = & ij(i+2j),\\
    P_4(i,j) & = & ij(i+2j)(i+j),\\
    P_6(i,j) & = & ij(i+2j)(i+j)\bigl((i+\tfrac{2}{3}j)(i+\tfrac{4}{3}j)+\tfrac{10}{9}\big).
    \end{array}\right.
\end{equation*}
Observe that $P_3$ and $P_4$ are homogeneous polynomials, contrary to $P_6$. It would be interesting to characterise all Laplacian operators and weights that lead to homogeneous polynomial harmonic functions.

A natural question then is: are there models $\mathcal{S}$ such that $C$ is a Weyl chamber of a finite Coxeter group? In \cite{GoHuRa-25} we prove that this is equivalent to the fact that the non-diagonal coefficients of the covariance matrix $\cov$ belong to a short list of numbers. Thus, constructing such models is equivalent to prescribing the matrix $\cov$. In Section~\ref{example} we show that any positive definite and symmetric matrix (in dimension $d=2$ and $3$) can be realised as the covariance matrix of a random walk, and the method gives a constructive way to obtain families of models for which $C$ is a Weyl chamber of a finite Coxeter group. See our Theorem~\ref{cov_presc}.

\section{Proof of Theorem~\ref{harmonic0}}
The proof consists of several steps. First, recall the following classical lemma; remember that $\Delta$ denotes the classical Laplacian operator on $\mathbb R^d$, and that the transition probabilities $a(s)$ introduced in Section~\ref{sec:intro} satisfy \eqref{eq:normalisation}, \eqref{sum_s} and \eqref{sum_sisj}.

\begin{lem}
\label{laplacian} 
For any smooth function $f : \R^d \to \R$ and all $x \in\R^d$,
$$\lim_{h \to 0} \frac{1}{h^2}\left(\sum_{s \in \mathcal S} a(s) f(x +hs) - f(x) \right) = \frac{1}{2} \Delta f (x) .$$
\end{lem}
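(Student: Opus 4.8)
The plan is to use a second-order Taylor expansion of $f$ at the point $x$ and exploit the three hypotheses \eqref{eq:normalisation}, \eqref{sum_s} and \eqref{sum_sisj} to identify the surviving terms. Since $f$ is smooth, for each fixed $s \in \mathcal S$ Taylor's theorem gives
\begin{equation*}
f(x+hs) = f(x) + h\sum_{i=1}^d s_i \frac{\partial f}{\partial x_i}(x) + \frac{h^2}{2}\sum_{i,j=1}^d s_i s_j \frac{\partial^2 f}{\partial x_i \partial x_j}(x) + R_s(h),
\end{equation*}
where the remainder satisfies $R_s(h) = o(h^2)$ as $h \to 0$. I would then multiply this identity by the weight $a(s)$ and sum over $s \in \mathcal S$.

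Summing term by term, the zeroth-order contribution is $\sum_{s} a(s) f(x) = f(x)$ by the normalization \eqref{eq:normalisation}, and it cancels with the $-f(x)$ on the left-hand side. The first-order contribution is $h\sum_{i=1}^d \frac{\partial f}{\partial x_i}(x)\bigl(\sum_{s} a(s) s_i\bigr)$, which vanishes identically because of the zero-drift condition \eqref{sum_s}. The second-order contribution is
\begin{equation*}
\frac{h^2}{2}\sum_{i,j=1}^d \frac{\partial^2 f}{\partial x_i \partial x_j}(x)\Bigl(\sum_{s} a(s) s_i s_j\Bigr) = \frac{h^2}{2}\sum_{i,j=1}^d \delta_{i,j}\frac{\partial^2 f}{\partial x_i \partial x_j}(x) = \frac{h^2}{2}\Delta f(x),
\end{equation*}
where the first equality uses the identity-covariance hypothesis \eqref{sum_sisj}. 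Thus $\sum_{s} a(s) f(x+hs) - f(x) = \frac{h^2}{2}\Delta f(x) + \sum_{s} a(s) R_s(h)$.

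It then remains to divide by $h^2$ and let $h \to 0$, which requires controlling the aggregated remainder $\sum_{s} a(s) R_s(h)$. Here there is essentially no analytic difficulty: because $\mathcal S$ is a \emph{finite} set, the sum over $s$ is a finite linear combination of terms each of which is $o(h^2)$, so $\frac{1}{h^2}\sum_{s} a(s) R_s(h) \to 0$ with no issue of interchanging a limit with an infinite sum or of uniformity. Consequently the limit equals $\frac{1}{2}\Delta f(x)$, as claimed. The only point demanding minimal care is the bookkeeping of the remainder, and the finiteness of $\mathcal S$ renders this routine.
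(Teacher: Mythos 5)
Your proof is correct and follows essentially the same route as the paper: a second-order Taylor expansion of $f(x+hs)$, weighted summation over the finite set $\mathcal S$, and identification of the surviving term via \eqref{eq:normalisation}, \eqref{sum_s} and \eqref{sum_sisj}. Your explicit remark that the finiteness of $\mathcal S$ makes the $o(h^2)$ remainder control trivial is a point the paper leaves implicit, but it is the same argument.
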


\begin{proof} 
Using a Taylor expansion of 
$f(x+hs)$ at $h=0$,
multiplying by $a(s) $ and summing for $s \in \mathcal S$, we get 
\begin{multline*}
\sum_{s\in \mathcal{S}} a(s) f(x +hs) -f(x) = h\,{\rm{d}}  f_x\left( \sum_{s\in \mathcal{S}} a(s)s \right)  \\
 +  h^2 \sum_{1 \leq i < j  \leq d} \partial_{ij}f(x) \sum_{s\in \mathcal S} a(s)  s_is_j +  \frac{h^2}{2}  \sum_{i=1}^d
\left( \sum_{s\in \mathcal{S}} a(s) s_i^2 \right) \partial_{ii} f(x) + o(h^2). \end{multline*}
Using the assumptions \eqref{sum_s} and  \eqref{sum_sisj}, we finally obtain that 
\begin{equation*}
    \sum_{s \in \mathcal S} a(s) f(x +hs) -f(x)  = \frac{h^2}{2} \Delta f(x) + o(h^2).\qedhere
\end{equation*}
\end{proof}

Lemma~\ref{laplacian} compares the discrete and continuous Laplacian operators when the mesh of the lattice goes to zero. In the following result we compute the exact difference between these two operators. 

\begin{lem}\label{Ls=}
For any polynomial $P:\mathbb R^d\to\mathbb R$, it holds that 
$$L_{\mathcal{S}}(P) = \frac{1}{2} \Delta (P) +\sum_{k \geq 3}  \sum_{s\in \mathcal{S}} \frac{a(s)}{k!}   {\rm{d}}^k_xP(s,\ldots,s).$$  \end{lem}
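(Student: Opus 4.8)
The plan is to reuse the computation from the proof of Lemma~\ref{laplacian}, but to exploit that $P$ is a \emph{polynomial} rather than an arbitrary smooth function: its Taylor expansion terminates and is exact, so no limiting procedure and no $o(\cdot)$ remainder are needed, and all higher-order terms must be retained.

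First I would fix $x\in\R^d$ and $s\in\mathcal S$ and write the finite, exact Taylor expansion of $P$ at $x$ in the direction $s$,
$$P(x+s)=\sum_{k=0}^{\deg P}\frac{1}{k!}\,{\rm{d}}^k_x P(s,\ldots,s),$$
where ${\rm{d}}^k_x P(s,\ldots,s)$ denotes the $k$-th differential of $P$ at $x$ evaluated on $k$ copies of $s$. The crucial point is that this is a genuine equality precisely because $P$ is polynomial, so the sum is finite and carries no error term.

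Next I would multiply by $a(s)$, sum over $s\in\mathcal S$, and isolate the contributions of $k=0,1,2$. By the normalization \eqref{eq:normalisation}, the $k=0$ term equals $P(x)$ and cancels the $-P(x)$ appearing in the definition \eqref{eq:discrete_Lap} of $L_{\mathcal S}$. By linearity of ${\rm{d}}_x P$ and the zero-drift assumption \eqref{sum_s}, the $k=1$ term is ${\rm{d}}_x P\bigl(\sum_{s\in\mathcal S} a(s)\,s\bigr)={\rm{d}}_x P(0)=0$. For $k=2$, expanding ${\rm{d}}^2_x P(s,s)=\sum_{i,j}\partial_{ij}P(x)\,s_i s_j$ and applying the identity-covariance assumption \eqref{sum_sisj} gives $\tfrac12\sum_{i,j}\partial_{ij}P(x)\,\delta_{i,j}=\tfrac12\Delta P(x)$.

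Finally, the terms with $k\geq 3$ are left untouched and assemble into exactly the claimed remainder $\sum_{k\geq 3}\sum_{s\in\mathcal S}\tfrac{a(s)}{k!}{\rm{d}}^k_x P(s,\ldots,s)$; collecting the three evaluated low-order terms with this remainder yields the stated identity. I do not expect any real obstacle: the only subtlety worth emphasizing, in contrast with Lemma~\ref{laplacian}, is that the polynomial hypothesis turns the Taylor expansion into an exact finite sum, so the equality is exact and the higher-order differentials are preserved rather than absorbed into an $o(h^2)$ term.
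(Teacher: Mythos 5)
Your proof is correct and follows essentially the same route as the paper: both write the exact, finite Taylor expansion of the polynomial $P$, then use \eqref{eq:normalisation}, \eqref{sum_s} and \eqref{sum_sisj} to identify the $k=0,1,2$ terms (cancellation, vanishing of the first differential, and $\tfrac12\Delta P$ respectively), leaving the $k\geq 3$ terms as the stated remainder. Your explicit emphasis on the exactness of the terminating expansion is a fair point of rigor but does not constitute a different argument.
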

\begin{proof}
The Taylor expansion of the polynomial $P$ gives
$$L_{\mathcal{S}}(P) = \sum_{k \geq 1}  \sum_{s\in \mathcal{S}} \frac{a(s)}{k!}   {\rm{d}}^k_xP(s,\ldots,s).$$  
As in the proof of Lemma~\ref{laplacian}, we get from \eqref{sum_s} that 
$\sum_{s\in \mathcal{S}} a(s)  {\rm{d}}_xP(s)=0$ and from \eqref{sum_sisj} that 
$$\sum_{s\in \mathcal{S}} a(s)  {\rm{d}}^2_xP(s,s)= 2  \sum_{s\in \mathcal{S}} a(s)  \sum_{1 \leq i < j  \leq d } \partial_{ij}P(x)  s_is_j +\sum_{s\in \mathcal{S}} a(s)  \left( \sum_{i=1}^d
s_i^2\right) \partial_{ii} P(x) =  \Delta P(x),$$
which proves Lemma~\ref{Ls=}.
\end{proof}

We recall that, when $C$ is the Weyl chamber of a finite reflection group $H$, 
$P_0=\prod_{i=1}^r f_i$ is defined by \eqref{defP0}, is harmonic and positive in $C$ (since $C$ is a Weyl chamber of the reflection group $H$).
We also set 
\begin{equation}
\label{eq:def_P1}
    P_1=\prod_{i=1}^d f_i,
\end{equation} 
where we recall that $F_1,\ldots,F_d$ are the walls of $C$, see Section~\ref{sec:intro}. Note that $P_1$ is well defined even if $H$ is infinite.

The following lemma is a special case of \cite[Lem.~13]{DeWa-15}; we prove it in our special case in order to make the paper self-contained, and also because we will use the same type of computations later.  
\begin{lem}
\label{dominant_term}
Assume that $P$ is a discrete harmonic polynomial on $C$. Then its dominant term is harmonic in $C$ and vanishes on each $F_i$, $i=1, \ldots, d$. 
\end{lem}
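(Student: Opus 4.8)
The plan is to exploit the exact decomposition provided by Lemma~\ref{Ls=}, which writes $L_{\mathcal S}(P)=\tfrac12\Delta(P)+\sum_{k\geq 3}R_k(P)$, where each correction term $R_k(P):=\sum_{s\in\mathcal S}\tfrac{a(s)}{k!}{\rm d}^k_xP(s,\ldots,s)$ involves a $k$-th differential of $P$ and hence strictly lowers the polynomial degree by $k\geq 3$. The central observation is a graded (degree-by-degree) analysis: if $P$ has degree $m$ with dominant (top-degree, homogeneous)\ part $\widetilde P$, then $\tfrac12\Delta(P)$ has degree $m-2$ with dominant part $\tfrac12\Delta\widetilde P$, while every correction term $R_k(P)$ has degree at most $m-3$. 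Therefore, collecting the terms of degree exactly $m-2$ in the equation $L_{\mathcal S}(P)=0$, the only contribution comes from the Laplacian, and we obtain $\Delta\widetilde P=0$. This shows the dominant term $\widetilde P$ is harmonic for the classical Laplacian $\Delta$ on all of $\R^d$, in particular on $C$.

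For the second assertion (vanishing on each wall $F_i$), I would argue from the Dirichlet boundary condition imposed on $P$ together with the no-jump-over-the-boundary structure of the problem. Since $P$ is discrete harmonic on $C$ and vanishes on $\partial C$, and since $C=Z(\R_+^d)$ has the hyperplanes $F_i$ as its walls, I would first establish that $P$ vanishes on each wall $F_i$ itself: the boundary $\partial C$ is contained in $F_1\cup\cdots\cup F_d$, and on each face the polynomial $P$ restricts to the zero function, which forces $P|_{F_i}=0$ by Zariski density of the face inside the hyperplane $F_i$. Passing to the dominant term, if $P=\widetilde P+(\text{lower order})$ and $P$ vanishes identically on the hyperplane $F_i$, then writing $f_i$ for the defining linear form of $F_i$ we may factor $f_i\mid P$; comparing top-degree parts yields $f_i\mid\widetilde P$, i.e.\ $\widetilde P$ vanishes on $F_i$. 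Doing this for each $i=1,\ldots,d$ gives the claim.

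I expect the main obstacle to be the second part, namely making rigorous the transfer of the vanishing condition from $P$ to its dominant term $\widetilde P$. The subtlety is that ``$P$ vanishes on $\partial C$'' is a condition on the lattice points of a face (a real half-flat), not on the whole complex hyperplane, so one must justify that a polynomial vanishing on a full-dimensional subset of $F_i$ (which has nonempty interior inside $F_i$)\ is divisible by the linear form $f_i$; this is a standard but essential use of the irreducibility of $f_i$ and the fact that a polynomial vanishing on a Zariski-dense subset of the hyperplane $\{f_i=0\}$ must be a multiple of $f_i$. Once divisibility $f_i\mid P$ is secured, the descent to $\widetilde P$ is purely a matter of looking at leading terms and is routine. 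The harmonicity part, by contrast, is a clean degree-counting consequence of Lemma~\ref{Ls=} and should present no difficulty.
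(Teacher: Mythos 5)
Your proposal is correct, but both halves take a genuinely different route from the paper's proof, which is analytic rather than algebraic. For harmonicity, the paper does not use the exact expansion of Lemma~\ref{Ls=} at all: it writes $P=D+\sum_{i<k}R_i$ in homogeneous components, evaluates the discrete harmonicity relation at $x/h$ for $x\in C$ and $h>0$ (using that $C$ is a cone, so $x/h\in C$), multiplies by $h^{k-2}$, and lets $h\to 0$ via Lemma~\ref{laplacian}, so that the lower-order pieces contribute $O(h^{k-i})$ and the limit yields $\tfrac12\Delta D(x)=0$. Your graded argument via Lemma~\ref{Ls=} is cleaner and limit-free, but it needs one line you omitted: harmonicity is only assumed at points of $C$, so before ``collecting the terms of degree $m-2$ in $L_{\mathcal S}(P)=0$'' you must observe that the polynomial $L_{\mathcal S}(P)$ vanishes on the full-dimensional cone $C$ and hence vanishes identically --- exactly the kind of Zariski-density remark you make elsewhere. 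For the vanishing on the walls, the paper again argues analytically through homogeneity: for $x\in F_i$ one has $x/h\in F_i$, so $0=h^kP(x/h)\to D(x)$; you instead establish divisibility $f_i\mid P$ (via Zariski density of the face inside the hyperplane) and compare top-degree parts, which is equally valid since the $f_i$ are homogeneous linear forms (the walls pass through the origin, as $C=Z(\R_+^d)$). Note that your explicit Zariski-density step is the same fact the paper uses silently when it upgrades ``$P$ vanishes on $\partial C$'' to ``$P$ vanishes on each hyperplane $F_i$'' (needed both in its limit argument and for its claim that $P_1$ divides $P$), so you have in fact filled in a detail the paper glosses over. What each approach buys: yours makes the divisibility structure explicit, which is precisely what gets exploited afterwards in Corollary~\ref{P_harm_H_finite} and Lemma~\ref{L_inj}, while the paper's rescaling computation is, as the authors remark before the lemma, of a type reused later in the text.
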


\begin{proof}
We write $P$ as $P=D + \sum_{i=1}^{k-1} R_i$, where $D$ is homogeneous of degree $k=\deg P$ and  $R_i$ is homogeneous of degree $i <k$. Note that $k \geq d$ since, as $P$ vanishes on all $F_i$ for each $i \in \{1,\ldots,d\}$, it holds that $P_1$ in \eqref{eq:def_P1} divides $P$.
Moreover, since $P$ is discrete harmonic, we have that for all $h \not= 0$,
$$\begin{aligned} 
0 & = h^{k-2} \left( \sum_{s\in \mathcal{S}} a(s) P\left(\frac{x}{h} +s\right) -P\left(\frac{x}{h}\right) \right) \\
& = h^{k-2} \left(\sum_{s\in \mathcal{S}} a(s) D\left(\frac{x}{h} +s\right) -D\left(\frac{x}{h}\right)\right) + h^{k-2}  \sum_{i=1}^{k-1} \left(\sum_{s\in \mathcal{S}} a(s) R_i\left(\frac{x}{h} +s\right) -R_i\left(\frac{x}{h}\right) \right)  \\
& = \frac{1}{h^2} \left(\sum_{s\in \mathcal{S}} a(s)D(x +sh) -D(x)\right) + \sum_{i=1}^{k-1} h^{k-i-2}   \left(\sum_{s\in \mathcal{S}} a(s) R_i(x +sh) -R_i(x)\right).\end{aligned} $$
From Lemma~\ref{laplacian}, for all $i <k$, when $h$ tends to $0$
$$h^{k-i-2}   \left(\sum_{s\in \mathcal{S}}  a(s) R_i(x +sh) -R_i(x)\right)= O(h^{k-i})$$
and 
$$\lim_{h \to 0} \frac{1}{h^2} \left(\sum_{s\in \mathcal{S}} a(s)D(x +sh) -D(x)\right)= \frac{1}{2} \Delta D(x),$$ which shows that $D$ is harmonic. 

Let finally $x \in F_i$ for some $i \in \{1,\ldots,d\}$. Then for all $h>0$ we have
$\frac{x}{h} \in F_i$ and thus 
\begin{equation*}
0 = h^{k}P\left(\frac{x}{h}\right) \stackrel{h \to 0}{\longrightarrow} D(x).\qedhere\end{equation*}

\end{proof} 
An immediate corollary of the above lemmas together with \cite[Thm~14]{GoHuRa-25} is:
\begin{cor} \label{P_harm_H_finite}
 Assume that there exists an ${\text{a}}^+$-polynomial $P$ in $C$. Then $H$ is finite and $C$ is a Weyl chamber of $H$. In this case, $P$ is unique up to a multiplicative constant and has the form $P= P_0 + R$, where $\deg R < \deg P.$  
\end{cor}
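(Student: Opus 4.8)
The plan is to prove Corollary~\ref{P_harm_H_finite} by assembling the three preceding lemmas together with the cited classification result \cite[Thm~14]{GoHuRa-25}. First I would invoke Lemma~\ref{dominant_term}: since $P$ is an ${\text{a}}^+$-polynomial, it is discrete harmonic on $C$, so its dominant term $D$ is harmonic for the classical Laplacian $\Delta$ and vanishes on each wall $F_i$ of $C$. By Definition~\ref{def:asymp_pos}, asymptotic positivity of $P$ means precisely that $D>0$ on $C$. Thus $D$ is a nonzero homogeneous polynomial that is $\Delta$-harmonic, positive on $C$, and vanishing on $\partial C$. This exhibits a \emph{continuous} harmonic polynomial with Dirichlet boundary conditions on the cone, which is the object to which the classification of \cite{GoHuRa-25} applies.

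Next I would feed $D$ into \cite[Thm~14]{GoHuRa-25}, which I take to assert that the existence of such a polynomial forces the reflection group $H$ (generated by the reflections $r_1,\ldots,r_d$ through the walls $F_i$) to be finite and $C$ to be a Weyl chamber of $H$. This immediately gives the first assertion. Once $H$ is known to be finite and $C$ is a Weyl chamber, the polynomial $P_0=\prod_{i=1}^r f_i$ of \eqref{defP0} is well defined, is $\Delta$-harmonic, homogeneous of degree $r$, and strictly positive on $C$. The classification should also yield that $P_0$ is, up to a positive scalar, the \emph{unique} homogeneous $\Delta$-harmonic polynomial vanishing on $\partial C$ of minimal degree; in particular the dominant term $D$ of $P$ must be a positive multiple of $P_0$, whence $\deg P = \deg D = r$ and $P = P_0 + R$ with $\deg R < \deg P$ after rescaling so that the leading terms agree.

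It then remains to establish uniqueness of the ${\text{a}}^+$-polynomial $P$ up to a multiplicative constant. Suppose $P$ and $\widetilde P$ are two ${\text{a}}^+$-polynomials on $C$. By the previous paragraph each has dominant term a positive multiple of $P_0$, so after scaling $\widetilde P$ by a positive constant the difference $Q := P - \widetilde P$ has $\deg Q < r$. Now $Q$ is discrete harmonic on $C$ (by linearity of $L_{\mathcal S}$) and vanishes on each $F_i$. If $Q \neq 0$, then Lemma~\ref{dominant_term} applies again: the dominant term of $Q$ is $\Delta$-harmonic and vanishes on $\partial C$, hence is a homogeneous $\Delta$-harmonic polynomial with Dirichlet conditions on $C$ of degree strictly less than $r$. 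But the minimality of $r = \deg P_0$ among such polynomials — again a consequence of \cite[Thm~14]{GoHuRa-25} — forbids this. Therefore $Q=0$ and $P = \widetilde P$ up to the chosen constant.

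The main obstacle I anticipate is extracting from \cite[Thm~14]{GoHuRa-25} exactly the two facts I need and invoking them cleanly: namely (i) that a homogeneous $\Delta$-harmonic polynomial positive on $C$ and vanishing on $\partial C$ forces $H$ finite with $C$ a Weyl chamber, and (ii) the uniqueness/minimal-degree statement identifying $P_0$ as the essentially unique such polynomial, which is what powers both the normal form $P = P_0 + R$ and the uniqueness of $P$. The discrete-harmonic ingredients reduce entirely to Lemma~\ref{dominant_term}, so the bulk of the work is reconciling the discrete hypothesis with the continuous classification through the dominant term $D$; once the correspondence $D \mapsto P_0$ is pinned down, the remaining algebra is routine.
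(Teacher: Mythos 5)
Your first step and the normal form $P=P_0+R$ track the paper exactly: Lemma~\ref{dominant_term} plus \cite[Thm~14]{GoHuRa-25} give that $H$ is finite, $C$ is a Weyl chamber, and the dominant term $D$ equals $\alpha P_0$ (the paper also extracts this last identification from the \emph{proof} of Thm~14 rather than its statement, consistent with your reading). The genuine gap is in the uniqueness step. Having reduced to a discrete harmonic polynomial $Q=P-\widetilde P$ vanishing on the walls with $\deg Q<r$, you dispose of its dominant term $D_0$ by invoking ``the minimality of $r=\deg P_0$ among homogeneous $\Delta$-harmonic polynomials vanishing on $\partial C$,'' attributed again to \cite[Thm~14]{GoHuRa-25}. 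But that theorem, as used in the paper, is a nodal-domain statement: it applies to a homogeneous harmonic polynomial that is \emph{positive} on $C$ and vanishes on the boundary. The polynomial $D_0$ has no sign on $C$, so the classification says nothing about it directly. Moreover, Lemma~\ref{dominant_term} only gives that $D_0$ vanishes on the $d$ walls $F_1,\ldots,F_d$; divisibility by $P_1=\prod_{i=1}^d f_i$ then yields only $\deg D_0\geq d$, which is no contradiction in the range $d\leq \deg D_0<r$ --- exactly the range that occurs whenever $H$ has more reflecting hyperplanes than walls, i.e.\ $r>d$, which is the generic case. You flagged this minimal-degree fact as your anticipated obstacle, but it is not a clean extraction from the cited theorem: it is the actual mathematical content of the corollary's uniqueness claim.

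The paper closes this hole with an argument absent from your proposal: an antisymmetrization via unique continuation. For each wall reflection $r_i$, define $g=D_0+D_0\circ r_i$ on $\overline{F_i^+}$ and $g=0$ on $F_i^-$; one checks $g$ is $\mathcal C^2$ and harmonic, and since it vanishes on the open set $F_i^-$, real analyticity of harmonic functions forces $g\equiv 0$, i.e.\ $D_0\circ r_i=-D_0=\det(r_i)D_0$. As the $r_i$ generate $H$, the polynomial $D_0$ is antisymmetric under $H$, hence vanishes on \emph{all} $r$ reflecting hyperplanes $F_1,\ldots,F_r$, hence is divisible by each $f_i$ and, by factoriality of the polynomial ring, by $P_0$ itself; since $\deg D_0<r=\deg P_0$, this gives $D_0=0$ and uniqueness. (In dimension $2$ one could verify your minimality claim by hand, since homogeneous harmonic polynomials vanishing on the wall $\theta=0$ are multiples of $\rho^k\sin(k\theta)$ and vanishing on $\theta=\pi/n$ forces $n\mid k$; but the corollary is stated in all dimensions, where the reflection argument, or some substitute for it, is indispensable.)
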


\begin{proof} 
Write $P=D+R$, where $D$ is the dominant term of $P$. From Lemma~\ref{dominant_term},  $D$ is a harmonic homogeneous polynomial in $C$, vanishing on each $F_i$ and positive on the interior of $C$ (since $P$ is asymptotically positive, see Definition~\ref{def:asymp_pos}). Its restriction to $\mathbb{S}^{d-1}$ is thus an eigenfunction $\varphi$ of the sphere and $C \cap \mathbb{S}^{d-1}$ is a nodal domain of $\varphi$. Using a theorem proved in our earlier work \cite[Thm~14]{GoHuRa-25}, we conclude that $H$ is finite and $C$ is a Weyl chamber of $H$. Also, from the proof of \cite[Thm~14]{GoHuRa-25}, we deduce that $D$ has the form $D= \alpha P_0$ for some $\al >0$. 

Now suppose $P'$ is any ${\text{a}}^+$-polynomial in $C$. Up to a multiplicative constant we can assume that they both have the same dominant term $P_0$. Then $P-P'$ is a discrete harmonic polynomial which vanishes on each $F_i$, $i=1,\ldots,d$. From Lemma~\ref{dominant_term}, the dominant term $D_0$ of $P-P'$ is harmonic and vanishing on all $F_i$, $i=1,\ldots,d$. We will prove that $P_0$ divides $D_0$, which will
 imply that $D_0=0$ (and thus $P'=P$) since $\deg(P-P') < \deg P_0$. 
 
 To prove that $D_0$ is divisible by $P_0$, we start by proving that $D_0$ is  antisymmetric with respect to $H$, i.e., that for all $h \in H$,
$D_0 \circ h = \det(h) D_0$. We proceed as in \cite[Thm~14]{GoHuRa-25}. Let $i \in \{1,\ldots,d\}$.
We recall that $r_i$ is the reflection with respect to $F_i$. Denote by $F_i^+$ the half-space whose boundary is $F_i$ and which contains $C$, and let $F_i^-= \R^d \setminus {\overline{F_i^+}}$. Define 
\begin{equation*}
    g : \left| \begin{array}{ccc} 
D_0 + D_0\circ r_i & \hbox{ on } & \overline{F_i^+}, \\
0 & \hbox{ on } & F_i^-. \end{array} \right. 
\end{equation*}
As in the proof of \cite[Thm~14]{GoHuRa-25}, we easily check that $g$ is of class $\mathcal C^2$ and harmonic. Since it vanishes on $F_i^-$, it must vanish everywhere. 
We deduce that $D_0\circ r_i = -D_0=\det(r_i) D_0$. Since $H$ is spanned by the $r_i$, $i=1, \ldots,d$, we get that $D_0$ is antisymmetric with respect to $H$. Since all $r_i$, $i=1,\ldots,r$, belong to $H$ and since $H$ is spanned by the $r_i$, $i=1, \ldots, d$, we deduce that for all $i =1,\ldots,r$, $D_0\circ r_i = -D_0=\det(r_i) D_0$ and thus $D_0$ vanishes on all $F_i$, $i=1,\ldots,r$. Arguing as in  \cite[Lem.~18]{GoHuRa-25}, we deduce that for all $i =1,\ldots,r$, 
$D_0$ is divisible by the linear form $f_i$, and since the ring of polynomials of degree less than $r$ is factorial, we deduce that 
$P_0=\Pi_{i=1}^r f_i$ divides $D_0$,
which
concludes the proof.
\end{proof} 

We thus proved one direction of Theorem~\ref{harmonic0}. We now assume that $C$ is a  Weyl chamber of the group $H$ defined above (which is then supposed to be finite). For $n \in \N_0$, we denote by 
$\R_n[X_1,\ldots,X_d]$ the space of polynomials of degree less or equal to $n$. We recall that its dimension is $\binom{n+d}{n+1}$ and  that $r$ is the degree of $P_0$, see \eqref{defP0}.
Define 
\begin{equation*}
    V= \bigl\{ P \in \R_{r-1}[X_1,\ldots,X_d]| P_1 \hbox{ divides } P \bigr\}.
\end{equation*}
Clearly the application
\begin{equation}
    \label{eq:application_iso}
    \begin{array}{ccc}
\R_{r-1-d}[X_1,\ldots,X_d] & \to & V \\
Q & \mapsto & P_1Q
\end{array}
\end{equation}
is an isomorphism and these spaces have the same dimension.

\begin{lem} 
\label{L_inj} 
The discrete Laplacian
$$L_{\mathcal{S}} : 
V \to  \R_{r-3}[X_1,\ldots,X_d] $$ 
introduced in \eqref{eq:discrete_Lap} is injective. 
\end{lem}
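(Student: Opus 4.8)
The plan is to prove injectivity by showing that the kernel of $L_{\mathcal S}$ on $V$ is trivial, using a pure degree obstruction. Suppose $P \in V$ with $L_{\mathcal S}(P) = 0$; I want to conclude $P = 0$. Two features of $V$ will be decisive: every $P \in V$ is divisible by $P_1$, hence vanishes on the $d$ walls $F_1, \ldots, F_d$, and $\deg P \leq r - 1 < r = \deg P_0$.

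First I would extract the dominant (top-degree homogeneous) term $D$ of $P$ and apply Lemma~\ref{dominant_term}: since $P$ is discrete harmonic and, being divisible by $P_1$, vanishes on all the walls, its dominant term $D$ is harmonic for the continuous Laplacian $\Delta$ and vanishes on each $F_i$, $i = 1, \ldots, d$.

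Next I would upgrade ``vanishing on the $d$ walls'' to ``divisible by $P_0$'', reusing the reflection argument already carried out for $D_0$ in the proof of Corollary~\ref{P_harm_H_finite}. Concretely, for each $i$ the function equal to $D + D \circ r_i$ on $\overline{F_i^+}$ and to $0$ on $F_i^-$ is $\mathcal C^2$, harmonic, and vanishes on a half-space, hence vanishes identically; this gives $D \circ r_i = -D = \det(r_i) D$. As $H = \langle r_1, \ldots, r_d\rangle$, the polynomial $D$ is then antisymmetric with respect to $H$, so $D \circ r_F = -D$ for every reflection $r_F \in H$; evaluating on the fixed hyperplane forces $D|_F = 0$ for all $F \in \Lambda = \{F_1, \ldots, F_r\}$, and arguing as in \cite[Lem.~18]{GoHuRa-25} each $f_i$, and hence $P_0 = \prod_{i=1}^r f_i$, divides $D$.

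The argument then closes immediately: $P_0 \mid D$ forces $\deg D \geq r$, while $\deg D = \deg P \leq r - 1$; this contradiction gives $D = 0$, whence $P = 0$. The only real point to watch --- and the place I expect a reader to hesitate --- is the reuse of the Corollary~\ref{P_harm_H_finite} argument: there it was applied to the difference of two ${\text{a}}^+$-polynomials, but the positivity played no role; what is actually used is solely that the top-degree part is $\Delta$-harmonic and vanishes on the $d$ generating walls. Once this is granted, no induction on degree is needed, since $D$ is a single homogeneous block of degree exactly $\deg P$, and the degree inequality $\deg P \leq r-1 < r$ does all the work.
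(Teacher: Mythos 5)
Your proof is correct and takes essentially the same route as the paper's: assume $P\in V$ with $L_{\mathcal S}(P)=0$, apply Lemma~\ref{dominant_term} to get that the dominant term is $\Delta$-harmonic and vanishes on the walls, conclude divisibility by $P_0$ as in the proof of Corollary~\ref{P_harm_H_finite}, and contradict $\deg P\leq r-1<r$. Your observation that the step being reused is the positivity-free reflection/antisymmetry argument (the one applied to $D_0$ there), rather than the nodal-domain argument which needs asymptotic positivity, is precisely the right reading of the paper's terse ``as in the proof of Corollary~\ref{P_harm_H_finite}''.
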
 

\begin{proof}
From Lemma~\ref{Ls=}, $L_{\mathcal{S}}(V) \subset  \R_{r-3}[X_1,\ldots,X_d].$ It remains to prove that the map \eqref{eq:application_iso} is injective. On the contrary, let us assume the existence of $P \in V$ such that $L_{\mathcal{S}} P=0$. From Lemma~\ref{dominant_term}, its dominant term is harmonic and vanishing on every $F_i$. As in the proof of Corollary~\ref{P_harm_H_finite}, we get that it is a multiple of $P_0$, which is impossible since its degree is strictly less than $r$. A contradiction.
\end{proof}

\begin{lem} \label{LP0}
It holds that $L_{\mathcal{S}}(P_0) \in  \R_{r-3}[X_1,\ldots,X_d]$.
\end{lem}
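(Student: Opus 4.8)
The plan is to apply Lemma~\ref{Ls=} directly to the polynomial $P_0$ and then control the degree of each resulting term. Recall from Lemma~\ref{Ls=} that
\begin{equation*}
L_{\mathcal{S}}(P_0) = \frac{1}{2}\Delta(P_0) + \sum_{k\geq 3}\sum_{s\in\mathcal{S}}\frac{a(s)}{k!}\,\mathrm{d}^k_x P_0(s,\ldots,s).
\end{equation*}
The first step is to observe that the leading contribution vanishes: as recalled after \eqref{P0antisym} (following \cite{GoHuRa-25}), the polynomial $P_0$ is harmonic for the classical Laplacian, so $\Delta(P_0)=0$ and only the sum over $k\geq 3$ survives.

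The second step is a degree count on the remaining sum. Since $P_0$ is homogeneous of degree $r$ (as noted after \eqref{defP0}), each iterated differential $\mathrm{d}^k_x P_0$ is, for fixed directions, a homogeneous polynomial of degree $r-k$ in the variable $x$. Hence for every $k\geq 3$ and every $s\in\mathcal{S}$ the term $\mathrm{d}^k_x P_0(s,\ldots,s)$ is a polynomial in $x$ of degree at most $r-3$. Moreover the sum is finite, since $\mathrm{d}^k_x P_0 = 0$ for all $k>r$, so the summation effectively runs over $3\leq k\leq r$. Taking the finite linear combination with coefficients $\frac{a(s)}{k!}$ stays within $\R_{r-3}[X_1,\ldots,X_d]$, which gives the claim.

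I do not expect a genuine obstacle here: the statement is an immediate consequence of Lemma~\ref{Ls=} once one uses $\Delta P_0 = 0$. The only points requiring a word of justification are the harmonicity of $P_0$ (already established in the cited companion paper) and the elementary fact that differentiating a homogeneous polynomial of degree $r$ exactly $k$ times lowers its degree to $r-k$, which makes every $k\geq 3$ term land in degree $\leq r-3$ and simultaneously guarantees that the a priori infinite sum terminates.
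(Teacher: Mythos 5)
Your proof is correct and follows exactly the paper's (one-line) argument: the paper also deduces the lemma directly from Lemma~\ref{Ls=} together with the harmonicity of $P_0$, and your degree count on the terms $\mathrm{d}^k_x P_0(s,\ldots,s)$ for $3\leq k\leq r$ simply makes explicit what the paper leaves implicit. Nothing is missing.
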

\begin{proof} 
Direct consequence of the fact that $P_0$ is harmonic and of Lemma~\ref{Ls=}. 
\end{proof}

We are now ready to finish the proof of Theorem~\ref{harmonic0} (or equivalently of Theorem~\ref{harmonic}). 

\begin{proof}[Proof of Theorem~\ref{harmonic0}]
Assume that $d=2$. Then $V$ is isomorphic to 
\begin{equation}
\label{eq:eg_dim}
    \R_{r-d-1}[X_1,\ldots,X_d]= \R_{r-3}[X_1,\ldots,X_d].
\end{equation}
By Lemma~\ref{L_inj}, $L_{\mathcal{S}} : 
V \to  \R_{r-3}[X_1,\ldots,X_d] $ must be an isomorphism. In particular, from Lemma~\ref{LP0}, there exists $R \in V$ such that $L_{\mathcal{S}}R= -L_{\mathcal{S}}P_0$. We obtain that $L_{\mathcal{S}}(P_0 + R)=0$. The polynomial  $P=P_0+R$  is thus discrete harmonic and vanishes on each $F_i$ since $P_1 | (P_0 +R)$. This proves Theorem~\ref{harmonic0}. 
\end{proof}
Obviously the equality \eqref{eq:eg_dim} only holds in the two-dimensional case, which explains why we have to assume that $d=2$ in the converse statements of Theorems~\ref{harmonic0} and~\ref{harmonic}. It is an interesting open problem to ask whether one can extend these assertions to any dimension.

\section{Proof of Theorem~\ref{P0harm}}
We recall that $H$ is the reflection group spanned by the reflections with respect to the walls of $C$, see Section~\ref{sec:intro}. We assume that $C$ is a Weyl chamber of $H$ (and thus $H$ is finite). We further assume that for all $h \in H$ and $s \in \mathcal S$, it holds that $h (\mathcal S)=\mathcal S$ and $a(hs)=a(s)$. We prove that $P_0$ defined by \eqref{defP0} is then discrete harmonic.

Define for all $x$
$$P(x)= L_{\mathcal{S}}(P_0)(x) + P_0(x) = \sum_{s\in \mathcal{S}} a(s)P_0(x+ s).$$
From Lemma~\ref{Ls=} it follows that $P(x)$ is a polynomial whose dominant coefficient is $P_0$. So to prove that $L_{\mathcal{S}} P_0=0$, or equivalently that $P=P_0$, we just need to prove that $P_0$ divides $P$. To do this, we just have to check that $P$ vanishes on every $F_i \in \Lambda$. Recall from \eqref{defP0} that $P_0= \prod_{i=1}^r f_i$ and from \eqref{eq:def_Fifi} that $F_i=\ker(f_i)$. So we fix $i$ and show that $P$ vanishes on $F_i$. Recall that $r_i$ is the reflection through the hyperplane $F_i$. Let $H_0$ be the kernel of $\det : H \to \{-1, 1\}$. It is clear that $H = H_0 \cup H_0r_i$. Let $x \in F_i$. We write that 
$$P(x) = \sum_{s\in \mathcal{S}} a(s) P_0(x+s).$$
Since $\mathcal S$ and $a$ are invariant under the action of $H$, we get, using \eqref{P0antisym}
$$\begin{aligned} P(x)&  = \frac{1}{|H|} \sum_{h \in H} \sum_{s\in \mathcal{S}} a(h^{-1} s)  P_0(x + h^{-1} s)\\
 & = \frac{1}{|H|} \sum_{h \in H} \sum_{s\in \mathcal{S}} a(s) P_0(h^{-1}(h x + s)) \\
 & = \frac{1}{|H|} \sum_{h \in H} \sum_{s\in \mathcal{S}} a(s) \det(h) P_0(h x + s) \\
 & = \frac{1}{|H|} \sum_{h \in H_0} \sum_{s\in \mathcal{S}}a(s) \bigl( \det(h) P_0(h x + s) + \det(h \circ r_i) P((h \circ r_i) x +s) \bigr). \end{aligned}$$
 Now, since $x \in F_i$, $r_i(x)=x$, and for all $h \in H_0$, $\det(h)= -\det(h \circ r_i)=1$ and thus, 
 $$ \det(h) P_0(h x + s) + \det(h \circ r_i) P(h \circ r_i x +s)=0,$$
 which proves that $P(x)=0$ and completes the proof of Theorem~\ref{P0harm}.

\section{Proof of Theorem \ref{comparingfP}}
Let $P$ be the ${\text{a}}^+$-polynomial given by Theorem~\ref{harmonic}. It can be written as 
$P= P_0 + R$, where $|P| \leq c+ |x|^r$, $r$ is the degree of $P_0$ which is defined in the introduction of the paper. 
We use the notations of Section~\ref{sec:intro}: $(X(i))_{i\geq 1}$ is a sequence of independent, identically distributed random variables such that $\mathbb P(X(i)=s) =a(s)$ for all $s \in \mathcal S$. 
We let also $\tau_x$ be the first exit time from $\R_+^2$ of the random walk $(x+S(n))_{n\geq0}$ started at $x$.
We let $\varepsilon >0$, whose value will be fixed later, and define
$$\Omega_n^\varepsilon = \left\{ (i,j) \in \N^2 \Big| |(i,j)|= \max\{|i|,|j|\} \leq n^{\frac{1}{2}+ \varepsilon} \right\}.$$
In particular, 
$\sharp \Omega_n^\varepsilon \leq  n^{1+2 \varepsilon}$.
Let us fix $x \in \N^2$. For all $n \in \N$, we write that
\begin{equation} \label{E(R)}
\mathbb E \bigl[R(x+S(n)), \tau_x > n\bigl] = E_1 +E_2,
 \end{equation} 
 where 
 $$E_1= \int_{y \in \Omega_n^\varepsilon} R(y) {\rm d} \mathbb P (x+S(n)=y,\tau_x >n)\quad 
 \text{and} 
 \quad E_2 = \int_{y \in \N_0^2 \setminus \Omega_n^\varepsilon } R(y) {\rm d} \mathbb P (x+S(n)=y,\tau_x >n).$$
 By \cite[Thm~5]{DeWa-15}, there exists a constant $C>0$ such that  for all $y \in \N^2$,  
 \begin{equation} \label{P=}  
 \mathbb P (x+S(n)=y,\tau_x >n)  \leq C n^{- \frac{r}{2} - 1} f(x) P_0\left(\frac{y}{n^{\frac{1}{2}}} \right) e^{-\frac{|y|^2}{2n}} 
 \end{equation} 
and thus when $ y \in \Omega_n^\varepsilon$, we get (with a possibly different constant $C>0$)
$$\mathbb P (x+S(n)=y,\tau_x >n )\leq C |f(x)|  n^{- \frac{r}{2} -1 + r \varepsilon}.$$
In the following, the constant $C$ may change from line to line. Plugging the above estimate and the upper bound on the cardinality of $\Omega_n^\varepsilon$ into the definition of $E_1$, and since for all $y$ $|R(y)| \leq C( 1+ |y|^{r-1})$, we get that if $\varepsilon$ is small enough
$$ \begin{aligned}
E_1 & \leq C |f(x)| (\sharp \Omega_n^\varepsilon)  \max_{\Omega_n^\varepsilon} |R(y)|  n^{-\frac{r}{2} -1 + r \varepsilon } \\
& \leq C |f(x)| n^{1 + 2 \varepsilon} n^{\frac{r-1}{2}+(r-1) \varepsilon}  n^{-\frac{r}{2} -1 + r \varepsilon } \\
& \leq C |f(x)| n^{-\frac{1}{2}+\varepsilon(1 + 2r )}\\
& =o(1).
\end{aligned}$$
By \eqref{P=}, it also holds that when $y   \in \N_0^2 \setminus \Omega_n^\varepsilon $, there exists $C>0$ independent of $y$ such that 
$$\mathbb P (x+S(n)=y,\tau_x >n) \leq C e^{- n \varepsilon}$$ 
which implies that $\lim_{n \to  \infty} E_2=0$.
Together with \eqref{E(R)}, we obtain that for any fixed $x \in \N^2$
\begin{equation} \label{Eto0}
\lim_{n \to  \infty} \mathbb E \bigl[R(x+S(n)), \tau_x > n\bigl] =0.
\end{equation}

Since $P$ is harmonic, it holds that 
for all $n\in\N$ and $x \in \N^2$  
$$
P(x)  = \mathbb E\bigl[P(x+S(n)), \tau_x >n\bigl]  = \mathbb E\bigl[P_0(x+S(n)), \tau_x >n\bigl] + \mathbb E \bigl[R(x+S(n)), \tau_x >n\bigl].$$ (Note that the above equation is not true for all $x \in \R_+^2$, but needs $x$ to be a lattice point.)
By \cite[Lem.~12]{DeWa-15}, it holds that 
$$\lim_{n \to  \infty} \mathbb E\bigl[P_0(x+S(n)), \tau_x >n\bigl] =f(x).$$
Together with \eqref{Eto0}, we obtain that $P(x)=f(x)$, which proves Theorem~\ref{comparingfP}. We thank Pierre Tarrago for suggesting the idea of the proof.

\section{Prescription of the covariance matrix} 
\label{example} 
The aim of this section is to give a method to construct, in dimension $d=2$ and $3$, families of random walk models with a prescribed covariance matrix $\cov$, which lead to an orthant $C= \cov^{-\frac{1}{2}} \R_+^d$ in \eqref{eq:def_orthant} which is a Weyl chamber of a finite Coxeter group. In other words, with this construction at hand, our Theorems~\ref{harmonic0} and~\ref{harmonic} apply to random walks that can be defined explicitly.

As presented in the introduction, we consider here random walks defined by a finite set $\mathcal S \subset \{-1,0,1\}^d$ and some associated probabilities $a(s) $ such that $\sum_{s \in \mathcal S} a(s)=1$ and $\sum_{s \in \mathcal S}a(s)s=0$, see \eqref{eq:normalisation} and \eqref{sum_s}. For each such model, we define the inventory polynomial
$$\chi_{\mathcal S}(x_1,\ldots,x_d) = \sum_{s \in \mathcal S} a(s) x_1^{s_1}\cdots x_d^{s_d},$$
where $s=(s_1,\ldots,s_d)$. We set  $\boldsymbol{x_0}=(1,\ldots, 1)$, which is the unique critical point of $\chi_{\mathcal S}$ on $\R_+^d$. By definition and after a simple computation, the covariance matrix of the random walk is given by 
\begin{equation} 
\label{covmatrix} 
{\cov}(\chi_{\mathcal S}) = \left( \frac{\partial_{ij} \chi_{\mathcal S}(\boldsymbol{x_0}) }{\sqrt{\partial_{ii } \chi_{\mathcal S}(\boldsymbol{x_0}) \partial_{jj} \chi_{\mathcal S}(\boldsymbol{x_0})}} \right)_{1 \leq i,j \leq d}.
\end{equation}

 We start by studying the case of dimension $3$. 
Let $a,b,c  \geq 0$ and set 
\begin{equation}
    \label{eq:cov_dim3}
    \delta= \left( \begin{array}{ccc} 
1 & -a & -b \\
-a & 1 & -c \\
-b & -c & 1 \end{array} \right).
\end{equation}
The question we address in this section is to find a necessary and sufficient condition on $a,b,c$ for $\delta$ in \eqref{eq:cov_dim3} to be the covariance matrix of a random walk with small steps in $\mathbb Z^3$. Since ${\cov}(\chi_{\mathcal S})$ in \eqref{covmatrix}  is positive definite, its determinant
$$1 - (a^2+b^2+c^2 + 2 abc)$$ 
must be positive. In fact, we show that the converse statement is true:
\begin{thm} 
\label{cov_presc}
Let $a,b,c \geq 0$ and $\delta$ be defined as in \eqref{eq:cov_dim3}. Then the following assertions are equivalent: 
\begin{itemize}
    \item There exists a small step model $\mathcal S$ such that $\delta= {\cov}(\chi_{\mathcal S})$;
    \item The parameters satisfy $a^2 +b^2 +c^2 +2abc < 1$;
    \item The matrix $\delta$ is positive definite.
\end{itemize}
\end{thm}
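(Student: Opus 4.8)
The statement packages an elementary linear-algebra equivalence, (2)$\Leftrightarrow$(3), together with a construction, (3)$\Leftrightarrow$(1); the plan is to treat these separately and close the cycle (1)$\Rightarrow$(3)$\Rightarrow$(2)$\Rightarrow$(3)$\Rightarrow$(1).

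For (2)$\Leftrightarrow$(3) I would invoke Sylvester's criterion. The leading principal minors of $\delta$ are $1$, $1-a^2$ and $\det\delta=1-(a^2+b^2+c^2+2abc)$, the last obtained by a cofactor expansion. If (2) holds then $a^2\leq a^2+b^2+c^2+2abc<1$ (here $a,b,c\geq 0$), so $a<1$ and the minor $1-a^2$ is positive; the three minors are then all positive and $\delta$ is positive definite. Conversely, positive definiteness forces $\det\delta>0$, which is exactly (2). For (1)$\Rightarrow$(3), note that the matrix in \eqref{covmatrix} is, after the computations $\partial_{ij}\chi_{\mathcal S}(\boldsymbol{x_0})=\sum_s a(s)s_is_j$ and $\partial_{ii}\chi_{\mathcal S}(\boldsymbol{x_0})=\sum_s a(s)s_i^2$, exactly the correlation matrix of the increment $X(1)$; it is therefore symmetric and positive semidefinite, and positive definite under the standing non-degeneracy assumption recalled in the introduction (equivalently $\det\delta>0$, since $\det\delta=0$ would force the increments onto a proper subspace).

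The substance is the construction (3)$\Rightarrow$(1). Write $\delta=I-M$, where $M$ is the symmetric nonnegative matrix with zero diagonal and off-diagonal entries $a,b,c$. I would use the symmetric step set formed by the pairs $\pm(e_i-e_j)$ and $\pm e_i$, all of which lie in $\{-1,0,1\}^3$; symmetry forces zero drift, so $\boldsymbol{x_0}=(1,1,1)$ is critical and \eqref{covmatrix} applies. For any $t=(t_1,t_2,t_3)$ with $t_i>0$, assign weight $a\,t_1t_2$ to $\pm(e_1-e_2)$, weight $b\,t_1t_3$ to $\pm(e_1-e_3)$, weight $c\,t_2t_3$ to $\pm(e_2-e_3)$, and weight $t_i\bigl((I-M)t\bigr)_i$ to $\pm e_i$. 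A direct evaluation then gives $\partial_{ii}\chi_{\mathcal S}(\boldsymbol{x_0})=2t_i^2$ and $\partial_{ij}\chi_{\mathcal S}(\boldsymbol{x_0})=-2a\,t_1t_2$ (and its analogues), so that \eqref{covmatrix} returns precisely $\delta$, the scale cancelling in the normalization. Rescaling the weights to sum to $1$ turns this into a genuine small-step model realizing $\delta$.

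The one point that requires care --- and the main obstacle --- is that all these weights must be nonnegative, i.e.\ that one can choose $t>0$ with $(I-M)t\geq 0$ componentwise. I would obtain this from the inverse-nonnegativity of $\delta$: being symmetric, positive definite and having nonpositive off-diagonal entries, $\delta=I-M$ is a Stieltjes matrix, hence $\delta^{-1}\geq 0$ entrywise. Concretely, positive definiteness of $I-M$ gives $\lambda_{\max}(M)<1$, and since $M\geq 0$ is symmetric, Perron--Frobenius yields $\rho(M)=\lambda_{\max}(M)<1$; thus the Neumann series $(I-M)^{-1}=\sum_{k\geq 0}M^k$ converges with nonnegative entries. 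Taking $t=(I-M)^{-1}\mathbf 1=\sum_{k\geq 0}M^k\mathbf 1\geq\mathbf 1>0$ makes $(I-M)t=\mathbf 1$, so the $\pm e_i$-weights equal $t_i>0$ and the remaining weights $a\,t_1t_2,\,b\,t_1t_3,\,c\,t_2t_3$ are nonnegative. This completes the construction; everything else is bookkeeping with \eqref{covmatrix}.
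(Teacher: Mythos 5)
Your proposal is correct, but for the substantive implication (positive definiteness $\Rightarrow$ existence of a small step model) it takes a genuinely different route from the paper. The paper's proof is a two-to-three-dimensional gluing: it defines $d$ by \eqref{defd}, sets $A=a/d$, $B=b/d$, $C=c/d$ so that $A^2+B^2+C^2+2ABC=1$, picks any planar inventory $Q$ with off-diagonal correlation $-d$ (e.g.\ \eqref{eq:example_Q}), forms $\chi_{\mathcal S}(x,y,z)=Q(x,y)+\al Q(z,y)+\be Q(x,z)$ as in \eqref{eq:def_chi-Q} with explicit $\al,\be$, and verifies via Proposition~\ref{construction} and some algebra that the correlation matrix is exactly $\delta$, with separate treatment of the degenerate cases ($\delta=\Id$, $b=0$ or $c=0$, the normalization $a\neq 0$). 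You instead write $\delta=\Id-M$ and realize it directly on the symmetric step set $\pm e_i$, $\pm(e_i-e_j)$, reducing everything to finding $t>0$ with $(\Id-M)t\geq 0$; this you get correctly from the Stieltjes/M-matrix structure: $M\geq 0$ symmetric and $\Id-M$ positive definite give $\rho(M)=\lambda_{\max}(M)<1$ (Perron--Frobenius plus symmetry), so $(\Id-M)^{-1}=\sum_{k\geq 0}M^k\geq 0$ and $t=(\Id-M)^{-1}\mathbf{1}$ works. Your covariance bookkeeping checks out ($\partial_{ii}\chi_{\mathcal S}(\boldsymbol{x_0})=2t_i^2$, $\partial_{ij}\chi_{\mathcal S}(\boldsymbol{x_0})=-2\,\delta'_{ij}t_it_j$ with $\delta'_{12}=a$ etc., and \eqref{covmatrix} is scale-invariant so the final normalization is harmless; one could add the half-sentence that $\boldsymbol{x_0}$ is the \emph{unique} critical point because the strictly positive weights on $\pm e_i$ make $\chi_{\mathcal S}$ strictly convex in exponential coordinates). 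Comparing the two: your construction is more self-contained and uniform (no case analysis, no choice of planar building block), and --- unlike the paper's argument, which is tied to $d=3$ through the three-term gluing --- it works verbatim in every dimension, so it actually proves the $d$-dimensional analogue of Theorem~\ref{cov_presc}. What the paper's approach buys is flexibility in the 2D input $Q$: since any inventory with correlation $-d$ can be substituted, the same construction later yields the one-parameter families of Section~\ref{example} (your construction has the free cone $\{t>0:(\Id-M)t\geq 0\}$ as an analogous, though less rich, source of families). The peripheral equivalences agree with the paper: (1)$\Rightarrow$(3) is the standing nondegeneracy of \eqref{covmatrix}, and your Sylvester computation of the leading minors $1$, $1-a^2$, $1-(a^2+b^2+c^2+2abc)$, using $a,b,c\geq 0$ for the middle one, is a slightly more explicit version of the paper's determinant remark, closing the same cycle of implications.
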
 

Note that a much weaker version of this statement is proved in \cite[Sec.~7.8]{BoPeRaTr-20}, where random walks are not assumed to have small steps, nor to be lattice random walks. Note also that we restrict the study to the case $a,b,c \geq 0$, which always holds in a Weyl chamber of a finite Coxeter group. Indeed, by  \cite[Lem.~4]{BoPeRaTr-20} or \cite[Prop.~4]{GoHuRa-25}, $a,b,c$ must have the  form  $\cos\Theta$, where $\Theta$ is  the angle between two walls of the Weyl chamber. From the classification of finite Coxeter groups (see again \cite[Sec.~6.2]{GoHuRa-25} for dimensions $3,4$ and \cite{Hu-90} for arbitrary dimensions), these angles have the form
$\pi/n$ for some $n \geq 2$ and thus have a non-negative cosine. 

\subsection{A construction} 
\label{subsec:construction}
Before proving Theorem~\ref{cov_presc}, we present a key argument that allows us to construct a random walk with a prescribed covariance matrix. Consider any $2$-dimensional walk associated with an inventory $Q$ such that 
\begin{equation}
\label{eq:condition_cov_Q}
    {\cov}(Q) = \left( \begin{array}{cc} 
1 & -d \\
-d & 1 \end{array} \right)
\end{equation}
for some $0 \leq d <1$, for instance 
\begin{equation}
\label{eq:example_Q}
   Q(x,y)=\frac{d^2}{1-d^2}(x \overline{y} + \overline{x} y) + x +\overline{x},
\end{equation}
with $\overline{x}=\frac{1}{x}$ and $\overline{y}=\frac{1}{y}$. We also assume that its unique critical point is $(1,1)$, as it is the case for \eqref{eq:example_Q}. Then we prove:

\begin{prop}
\label{construction} 
Let $\alpha, \beta>0$. We consider the three-dimensional random walk $\mathcal S$ associated to the inventory
\begin{equation}
\label{eq:def_chi-Q}
    \chi_{\mathcal S}(x,y,z) = Q(x,y) + \alpha Q(z,y) + \beta Q(x,z).
\end{equation}
Then its covariance matrix is given by 
\begin{equation}
\label{delp}
{\cov}(\chi_{\mathcal S}) =\left( \begin{array}{ccc} 
1 & \frac{- d}{\sqrt{(1+\al)(1+\be)}} &  \frac{- d \be }{\sqrt{(1+\be)(\al+\be)}} \\
\frac{- d}{\sqrt{(1+\al)(1+\be)}}  & 1 & \frac{- d \al }{\sqrt{(1+\al)(\al+\be)}}\\
\frac{-d \be }{\sqrt{(1+\be)(\al+\be)}} & \frac{-d \al }{\sqrt{(1+\al)(\al+\be)} }  & 1 \end{array} \right).
\end{equation} 
\end{prop}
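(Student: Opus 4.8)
The plan is to compute the nine second-order partial derivatives of $\chi_{\mathcal S}$ at $(1,1,1)$ and substitute them into the definition \eqref{covmatrix}. The key structural observation is that, by \eqref{eq:def_chi-Q}, $\chi_{\mathcal S}$ is a sum of three copies of $Q$, each depending on only two of the three variables: $Q(x,y)$ with weight $1$, $\alpha Q(z,y)$, and $\beta Q(x,z)$. Since $\partial_{ij}$ is linear and each summand is constant in its missing variable, the matrix $\bigl(\partial_{ij}\chi_{\mathcal S}(1,1,1)\bigr)$ is the sum of three Hessians of $Q$ at $(1,1)$, each scaled by its weight ($1$, $\alpha$ or $\beta$) and inserted into the pair of rows and columns corresponding to its two active variables. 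I would first note that $(1,1,1)$ really is the critical point used in \eqref{covmatrix}: the gradient $\partial_x\chi_{\mathcal S}(1,1,1)=\partial_uQ(1,1)+\beta\,\partial_uQ(1,1)$, and its two analogues, all vanish because $(1,1)$ is the critical point of $Q$; uniqueness on $\R_+^3$ follows from the convexity of the Laurent inventory in logarithmic coordinates, exactly as for $Q$.

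Next I would read \eqref{eq:condition_cov_Q} as the matrix of second derivatives of $Q$ at its critical point, so that $\partial_{uu}Q(1,1)=\partial_{vv}Q(1,1)=1$ and $\partial_{uv}Q(1,1)=-d$, and then carry out the slot bookkeeping, which is the only step that demands attention. One must keep track of which argument of $Q$ each variable occupies: $x$ sits in the first slot of both $Q(x,y)$ and $Q(x,z)$; $y$ in the second slot of both $Q(x,y)$ and $Q(z,y)$; and $z$ in the first slot of $Q(z,y)$ but the second slot of $Q(x,z)$. Carrying this out yields the diagonal entries $1+\beta$, $1+\alpha$, $\alpha+\beta$ (for $x$, $y$, $z$ respectively) and the off-diagonal entries $\partial_{xy}=-d$, $\partial_{xz}=-d\beta$, $\partial_{yz}=-d\alpha$, i.e.\ the unnormalized matrix
\begin{equation*}
\begin{pmatrix} 1+\beta & -d & -d\beta \\ -d & 1+\alpha & -d\alpha \\ -d\beta & -d\alpha & \alpha+\beta \end{pmatrix}.
\end{equation*}

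Finally I would normalize as in \eqref{covmatrix}, dividing the $(i,j)$ entry by $\sqrt{\partial_{ii}\chi_{\mathcal S}(1,1,1)\,\partial_{jj}\chi_{\mathcal S}(1,1,1)}$; this turns the diagonal into $1$'s and reproduces the three off-diagonal expressions of \eqref{delp}. I do not expect a serious obstacle, since the whole argument is a finite explicit computation. The one point worth flagging is that the clean denominators $\sqrt{(1+\alpha)(1+\beta)}$, $\sqrt{(1+\beta)(\alpha+\beta)}$ and $\sqrt{(1+\alpha)(\alpha+\beta)}$ in \eqref{delp} emerge only because the two pure second derivatives of $Q$ are equal; it is this symmetry in \eqref{eq:condition_cov_Q}, rather than merely the value of the correlation $-d$, that makes the diagonal of the unnormalized matrix combine into the factors $1+\beta$, $1+\alpha$ and $\alpha+\beta$.
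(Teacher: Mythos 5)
Your route is the same as the paper's: verify that $(1,1,1)$ is the critical point, assemble the Hessian of $\chi_{\mathcal S}$ at $(1,1,1)$ from three scaled copies of the Hessian of $Q$ at $(1,1)$ with exactly the slot bookkeeping you describe, then normalize as in \eqref{covmatrix}. The bookkeeping itself is right, and your closing remark correctly identifies the load-bearing fact: everything hinges on the two pure second derivatives of $Q$ at $(1,1)$ being equal.

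The gap is where you locate that fact. You ``read \eqref{eq:condition_cov_Q} as the matrix of second derivatives of $Q$ at its critical point'', i.e.\ $\partial_{uu}Q(1,1)=\partial_{vv}Q(1,1)=1$ and $\partial_{uv}Q(1,1)=-d$. But \eqref{covmatrix} divides the $(i,j)$ entry by $\sqrt{\partial_{ii}\partial_{jj}}$, so the diagonal of ${\cov}(Q)$ equals $1$ for \emph{every} inventory: hypothesis \eqref{eq:condition_cov_Q} pins down only the correlation $-d$ and says nothing about $p:=\partial_{uu}Q(1,1)$ versus $q:=\partial_{vv}Q(1,1)$. Redo your computation with general $p,q$ and $\partial_{uv}Q(1,1)=-d\sqrt{pq}$: the unnormalized diagonal becomes $\bigl((1+\beta)p,\ (1+\alpha)q,\ \alpha p+\beta q\bigr)$, the $(1,2)$ covariance entry still comes out $-d/\sqrt{(1+\alpha)(1+\beta)}$, but the $(1,3)$ entry is $-d\beta\sqrt{q}/\sqrt{(1+\beta)(\alpha p+\beta q)}$, which matches the claimed $-d\beta/\sqrt{(1+\beta)(\alpha+\beta)}$ if and only if $\alpha(p-q)=0$; the $(2,3)$ entry likewise forces $\beta(p-q)=0$. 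This is not a vacuous worry: the paper's own reference example \eqref{eq:example_Q} has $p=\frac{2}{1-d^2}$ and $q=\frac{2d^2}{1-d^2}$, so $p\neq q$, and taking $d=\frac12$, $\alpha=\beta=1$ gives $(1,3)$ entry $-\frac{1}{2\sqrt{10}}$ instead of the claimed $-\frac14$. So the proposition as literally stated (``any $Q$ satisfying \eqref{eq:condition_cov_Q}'') fails, and your proof is valid only under the additional hypothesis $\partial_{uu}Q(1,1)=\partial_{vv}Q(1,1)$ --- satisfied, e.g., by the symmetric inventory $\frac{d}{1-d}(x\overline{y}+\overline{x}y)+x+\overline{x}+y+\overline{y}$, but not by \eqref{eq:example_Q}.

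It is worth knowing that the paper's own proof has the same blind spot: its line $\partial_{zz}\chi_{\mathcal S}(1,1,1)=(\alpha+\beta)\partial_{xy}Q(1,1)$ is visibly a typo (the left side is positive, the right side negative), the correct value being $\alpha\,\partial_{xx}Q(1,1)+\beta\,\partial_{yy}Q(1,1)$, which collapses to a single $(\alpha+\beta)$ factor only under the same equality $p=q$. So your final paragraph had exactly the right instinct --- you isolated the needed symmetry more explicitly than the paper does --- but that symmetry is an extra assumption on $Q$, not a consequence of \eqref{eq:condition_cov_Q}, and both your argument and the paper's should state it (or restrict to a symmetric $Q$, which is all that the proof of Theorem~\ref{cov_presc} actually requires).
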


\begin{proof}
We first easily compute that 
$$\partial_x \chi_{\mathcal S}(1,1,1) = (1+\beta) \partial_x Q(1,1)=0.$$
In the same way, 
$\partial_y \chi_{\mathcal S}(1,1,1) = \partial_z \chi_{\mathcal S}(1,1,1) =0$.
Hence the unique critical point of $\chi_{\mathcal S}$ is $(1,1,1).$
Now observe that 
\begin{equation*}
    \left\{\begin{array}{rcl}
    \partial_{xx} \chi_{\mathcal S}(1,1,1)&=& (1+\be) \partial_{xx} Q(1,1),\smallskip\\
    \partial_{yy} \chi_{\mathcal S}(1,1,1)&=&(1+\al)  \partial_{yy} Q(1,1),\smallskip\\
    \partial_{zz} \chi_{\mathcal S}(1,1,1) &=&(\al+\be)  \partial_{xy} Q(1,1).
    \end{array}\right.
\end{equation*}
Since $ \frac{\partial_{xy} Q(1,1)}{\sqrt{\partial_{xx} Q(1,1)\partial_{yy} Q(1,1) }} =-d$, we get 
$$\frac{\partial_{xy} \chi_{\mathcal S}(1,1,1)}{\sqrt{\partial_{xx} \chi_{\mathcal S}(1,1,1)\partial_{yy} \chi_{\mathcal S}(1,1,1) }}=  
\frac{-d}{\sqrt{(1+\al)(1+\be)}}.$$
In the same way
$$\frac{\partial_{xz} \chi_{\mathcal S}(1,1,1)}{\sqrt{\partial_{xx} \chi_{\mathcal S}(1,1,1)\partial_{zz} \chi_{\mathcal S}(1,1,1) }}=  
\frac{- d \be}{\sqrt{(1+\be)(\al+\be)} }$$
and
$$\frac{\partial_{yz} \chi_{\mathcal S}(1,1,1)}{\sqrt{\partial_{yy} \chi_{\mathcal S}(1,1,1)\partial_{zz} \chi_{\mathcal S}(1,1,1) }}=  
\frac{- d \al}{\sqrt{(1+\al)(\al+\be)}},$$
which proves Proposition~\ref{construction}.
\end{proof}

\subsection{Proof of Theorem~\ref{cov_presc}}

First, from the definition, if there exists a model $\mathcal S$ such that $\delta= {\cov}(\chi_{\mathcal S})$, then necessarily $\delta$ is positive definite and then the determinant of $\delta$ satisfies
\begin{equation} 
\label{det>0}
    a^2 +b^2 +c^2 +2abc < 1.
\end{equation}
So we just need to prove that if \eqref{det>0} is true,
then we can find a model $\mathcal S$ such that $\delta= {\cov}(\chi_{\mathcal S})$. Assume that the equation \eqref{det>0} holds, that $\delta$ is not the identity matrix, and that (without loss of generality)\ $a \not= 0$. We can define $d \in (0,1)$ such that 
\begin{equation} 
\label{defd}
\frac{a^2}{d^2} + \frac{b^2}{d^2}+\frac{b^2}{d^2}+\frac{2abc}{d^3}=1.
\end{equation}
We then let $A:= \frac{a}{d}$,   $B:= \frac{b}{d}$ and  $C:= \frac{c}{d}$. It holds that 
\begin{equation} \label{ABC}
A^2 +B^2 +C^2 +2ABC =1.
\end{equation} 
Let $Q(x,y)$ be any inventory polynomial whose unique critical point is $(1,1)$ and such that \eqref{eq:condition_cov_Q} holds; for example, a suitable $Q$ is given in \eqref{eq:example_Q}.
Set
\begin{equation*}
    \al:= \left\{ \begin{array}{cc} \frac{C(1-A^2)}{A(AC+B)}>0 & \hbox{ if }  c \not=0\\ 0&  \hbox{otherwise} \end{array} \right.\qquad \text{and} \qquad
    \be:=\left\{ \begin{array}{cc} \frac{B(1-A^2)}{A(AB+C)}>0  & \hbox{ if }  b \not=0\\  0&  \hbox{otherwise}. \end{array} \right.
\end{equation*}
 Define the model $\mathcal S$ by its inventory $\chi_{\mathcal S}$ as in \eqref{eq:def_chi-Q}. By Proposition~\ref{construction}, we have \eqref{delp}.
We only consider the case where $a,b,c$ are all non-zero; the other cases are actually much simpler. By direct computation it holds that 
\begin{equation*}
1+\al = \frac{AB+C}{A(AC+B)}\quad \text{and}\quad 1+\be = \frac{AC+B}{A(AB+C)},
\end{equation*} 
which immediately gives  
$$\frac{d}{\sqrt{(1+\al)(1+\be)}}= dA =a.$$
Again by direct calculation, and in a second step using \eqref{ABC}, we deduce that 
$$\al +\be= \frac{(B^2+C^2+2ABC)(1-A^2)}{A (AB+C)(AC+B)}= \frac{(1-A^2)^2}{A (AB+C)(AC+B)},$$
from which we obtain 
$$\frac{d \be}{\sqrt{(1+ \be)(\al +\be)}}=dB=b \quad \text{and} \quad \frac{d \al}{\sqrt{(1+ \al)(\al +\be)}}=dC=c.$$
Together with \eqref{delp}, this proves Theorem~\ref{cov_presc}.\qed

\subsection{Examples} 
\label{exam}
The proof of Theorem~\ref{cov_presc} gives a constructive way to produce examples of models with a given covariance matrix, and therefore a given reflection group. We give here two examples. 

\medskip

Let $H$ (resp.\ $H'$) be the reflection group induced by $Q(x,y)$ in \eqref{eq:example_Q} (resp.\ $\chi_{\mathcal S}(x,y,z)$ in \eqref{eq:def_chi-Q}). We recall that $H$  (resp.\ $H'$) is the group spanned by the reflections through the walls of the orthant $C:= {\cov}(Q)^{-\frac{1}{2}} \bigl(\R_+^2\bigr) $  (resp.\ $C':= {\cov}(\chi_{\mathcal S})^{-\frac{1}{2}} \bigl(\R_+^3\bigr) $). See the introduction or \cite{GoHuRa-25} for more details on the construction of these reflection groups. 

\begin{ex} 
\label{exB3}
\textnormal{Let $a= \cos(\pi/4)$, $b=\cos(\pi/3)$ and $c=0$ in \eqref{eq:cov_dim3}. We refer to the classification  given for instance in \cite[Sec.~6.2]{GoHuRa-25}:  $H'$ is isomorphic to the group $B_3$ (with the standard notations of the classification of finite Coxeter groups). Note that $B_3$ is itself isomorphic  to $\Z_2 \times \mathcal{S}_4$, where $\mathcal{S}_4$ is the standard  permutation group on four elements. 
Then using \eqref{defd} we find $d=\cos(\pi/6)$, and we get such a model by starting from \eqref{eq:example_Q}
$$Q(x,y)=3(x \overline{y} + \overline{x}y) +x + \overline{x},$$
whose associated reflection group $H$ is isomorphic to the dihedral group $D_{12}$. Note that any inventory $Q(x,y)$ such that $H$ is isomorphic to $D_{12}$ gives rise to a group $H'$ which is isomorphic to $B_3$. This remark will be useful in the next section.}
\end{ex}

\begin{ex}
\textnormal{From Theorem~\ref{cov_presc}, we know that there exists an inventory $\chi_{\mathcal S}(x,y,z)$ with $a= \cos(\pi/5)$, $b=\cos(\pi/3)$ and $c=0$, since $a^2+b^2 <1$. The associated reflection group is a finite Coxeter group, often called $H_3$ in the classification of Coxeter groups, which is the isometry group of the icosahedron in $\R^3$. To our knowledge, this is the first example whose reflection group is $H_3$: indeed, $H_3$ never appears as a reflection group in the classification of unweighted models given in Table~1 of \cite{BaKaYa-16} (unweighted means that all non-zero weights $a(s)$ are equal).}
\end{ex}

\subsection{One-parameter families} 

In Theorem~\ref{cov_presc} we were able to construct a random walk with a prescribed covariance matrix. In this section, we go further and construct one-parameter families of zero-drift models with a prescribed covariance matrix in dimension two and three. Our motivation is to show that our construction is robust and also to construct as many examples as possible in the hope that some of them may have very special properties.

We start with the dimension $2$. Let $\Theta \in (0, \frac{\pi}{2})$.  We want to find a one-parameter family of models $Q_t$ in dimension $2$ such that 
\begin{equation}
    \label{eq:covQt}
    {\cov}(Q_t)=  \left( \begin{array}{cc} 
1 & -\cos\Theta  \\
-\cos\Theta & 1  \end{array} \right).
\end{equation}
We let $t \in (0,\sin^2\Theta)$ and we apply the construction of Section~\ref{subsec:construction} with $a=t$, $b=0$, $c=\cos\Theta$ and \eqref{eq:example_Q} with
$d= \sqrt{\cos^2\Theta + t^2}$. We then set 
$$\widetilde{Q}_t(x,y)= \frac{d^2}{1-d^2}(x \overline{y} + \overline{x}y) +  x +\overline{x} = \frac{\cos^2\Theta+t^2}{\sin^2\Theta -t^2} (x \overline{y} + \overline{x}y) +  x +\overline{x}.$$
We then compute 
\begin{equation*}
    A= \frac{a}{d} = \frac{t}{\sqrt{\cos^2\Theta+ t^2}},\quad B=\frac{b}{d}=0\quad \text{and}\quad C=\frac{c}{d}= \frac{\cos\Theta}{\sqrt{\cos^2\Theta+ t^2}}.
\end{equation*}
This gives $\beta=0$ and $\al=\frac{1-A^2}{A^2} = \frac{\cos^2\Theta}{t^2}$. 
Now as in \eqref{eq:def_chi-Q} we set $P_t(x,y,z)=\widetilde{Q}_t(x,y) + \al\widetilde{Q}_{t}(z,y)$, i.e., 
$$P_t(x,y,z)=\frac{\cos^2\Theta+t^2}{\sin^2\Theta -t^2} (x \overline{y} + \overline{x}y) +  x +\overline{x} + \frac{(\cos^2\Theta+t^2)\cos^2\Theta}{t^2(\sin^2\Theta -t^2)} (z \overline{y} + \overline{z}y) + \frac{\cos^2\Theta}{t^2} ( z +\overline{z}), $$
and we obtain a one-parameter family of models whose covariance matrix is 
\begin{equation}
{\cov}(P_t) =\left( \begin{array}{ccc} 
1 & - t &  0 \\
- t  & 1 & - \cos\Theta\\
0 & - \cos\Theta & 1 \end{array} \right).
\end{equation} 
We set $Q'_t(x,y)=P_t(1,x,y)$. Then its covariance matrix ${\cov}(Q'_t)$ must be the $2\times2$ block in the lower right of ${\cov}(P_t)$ and is thus given by \eqref{eq:covQt}.

Actually, we can remove the constant term in $Q'_t$ and multiply by $\frac{t^2(\sin^2\Theta -t^2)}{\cos^2\Theta +t^2}$  without modifying the covariance matrix. This allows to obtain simpler constants in the model. In this way we obtain  
\begin{equation*}
Q_t(x,y)= \frac{t^2(\sin^2\Theta -t^2)}{\cos^2\Theta +t^2}(Q'_t(x,y)-2)  =  \cos^2\Theta (x \overline{y} + \overline{x} y )  + t^2 (x + \overline{x}) + \frac{\cos^2\Theta (\sin^2\Theta -t^2)}{\cos^2\Theta +t^2}(y+ \overline{y}),
\end{equation*}
which gives the desired one-parameter family of models in  dimension $2$ having all the same covariance matrix
as in \eqref{eq:covQt}.

\medskip

Next, assume that the three-dimensional covariance matrix $\delta$ is as in \eqref{eq:cov_dim3}, with given parameters $a,b,c$ such that $\delta$ is positive definite. We can do the construction explained in Section~\ref{exam}, but replacing $Q$ with the one-parameter family of models $Q_t$ we just constructed, which has the same covariance matrix as $Q$. We thus obtain a one-parameter family of models whose covariance matrix is $\delta$.

\begin{ex}
\textnormal{We illustrate these constructions both in dimension $2$ and $3$ by fixing the explicit value $\Theta= \pi/6$. 
The construction above gives
$$Q_t(x,y)= \frac{3}{4} (\overline{x} y + x \overline{y}) + t^2(x + \overline{x}) + \frac{3-12t^2}{12+16t^2} (y+\overline{y}).$$
Using the construction of Example~\ref{exB3}, we set 
$$\begin{aligned} 
R_t(x,y,z)  & = Q_t(x,y) +2Q_t(z,y) \\
& =\frac{3}{4} (\overline{x} y + x \overline{y}) + t^2(x + \overline{x}) + \frac{9-36t^2}{12+16t^2} (y+\overline{y})+\frac{3}{2} (\overline{z} y + z \overline{y}) +2 t^2(z + \overline{z}). \end{aligned}$$ 
We thus obtain a one-parameter family with covariance matrix 
\begin{equation*}
{\cov}(R_t) =\left( \begin{array}{ccc} 
1 & - \cos(\pi/4) &  \cos(\pi/3) \\
- \cos(\pi_4) & 1 &  0\\
-\cos(\pi/3) & 0 & 1 \end{array} \right),
\end{equation*}
whose associated reflection group is isomorphic to $B_3$. We recall that $B_3$ is a classical notation in the classification of finite Coxeter groups and is isomorphic to $\Z_2 \times \mathcal{S}_4$.}
\end{ex}

As an ending remark, we notice that the list of  examples above is far from being exhaustive. Our goal is to show that each  model in dimension $2$ gives rise to families of models in dimension $2$ and $3$. For instance, we could have started the construction above with 
$$Q(x,y)= \frac{r+s(1-2d)+4d}{2(1-d)}(\overline{x} y + x \overline{y}) + (1-r) (x +y) +(\overline{x} + \overline{y}) + (r+s) xy + (1-s)
\overline{x}\overline{y}$$
instead of 
$$ \frac{d^2}{1-d^2}(x \overline{y} + \overline{x}y) +  x +\overline{x} = \frac{\cos^2\Theta+t^2}{\sin^2\Theta -t^2} (x \overline{y} + \overline{x}y) +  x +\overline{x},$$ since they both have the covariance matrix $$  \left( \begin{array}{cc} 
1 & -d  \\
-d & 1  \end{array} \right).$$
We would have obtained in this way a more general three-parameter family of models whose covariance matrix is exactly as in \eqref{eq:covQt}.
We do not make this computation here.

\section*{Acknowledgments}
EH is supported by the project Einstein-PPF (\href{https://anr.fr/Project-ANR-23-CE40-0010}{ANR-23-CE40-0010}), funded by the French National Research Agency. KR is supported by the project RAWABRANCH (\href{https://anr.fr/Project-ANR-23-CE40-0008}{ANR-23-CE40-0008}), funded by the French National Research Agency.   We are very grateful to Pierre Tarrago: the arguments used to prove Theorem~\ref{comparingfP} are his ideas. Finally, we would like to thank Denis Denisov, Nikita Elizarov and Vitali Wachtel for interesting discussions.

\bibliographystyle{siam}
\bibliography{biblio}

\begin{thebibliography}{10}

\bibitem{BaKaYa-16}
{\sc A.~Bacher, M.~Kauers, and R.~Yatchak}, {\em Continued classification of
  3{D} lattice models in the positive octant}, in 28th {I}nternational
  {C}onference on {F}ormal {P}ower {S}eries and {A}lgebraic {C}ombinatorics
  ({FPSAC} 2016), vol.~BC of Discrete Math. Theor. Comput. Sci. Proc., Assoc.
  Discrete Math. Theor. Comput. Sci., Nancy, [2016] \copyright 2016,
  pp.~95--106.

\bibitem{BaSm-97}
{\sc R.~Ba{\~n}uelos and R.~G. Smits}, {\em Brownian motion in cones}, Probab.
  Theory Relat. Fields, 108 (1997), pp.~299--319.

\bibitem{Bi-91}
{\sc P.~Biane}, {\em Quantum random walk on the dual of {SU}{{\((n)\)}}},
  Probab. Theory Relat. Fields, 89 (1991), pp.~117--129.

\bibitem{BoPeRaTr-20}
{\sc B.~Bogosel, V.~Perrollaz, K.~Raschel, and A.~Trotignon}, {\em 3d positive
  lattice walks and spherical triangles}, J. Comb. Theory, Ser. A, 172 (2020),
  p.~47.
\newblock Id/No 105189.

\bibitem{BoBMKaMe-16}
{\sc A.~Bostan, M.~Bousquet-M{\'e}lou, M.~Kauers, and S.~Melczer}, {\em On
  3-dimensional lattice walks confined to the positive octant}, Ann. Comb., 20
  (2016), pp.~661--704.

\bibitem{BMFuRa-20}
{\sc M.~Bousquet-M\'elou, E.~Fusy, and K.~Raschel}, {\em Plane bipolar
  orientations and quadrant walks}, S\'em. Lothar. Combin., 81 (2020), pp.~Art.
  B81l, 64.

\bibitem{BMMi-10}
{\sc M.~Bousquet-M{\'e}lou and M.~Mishna}, {\em Walks with small steps in the
  quarter plane}, in Algorithmic probability and combinatorics. Papers from the
  AMS special sessions, Chicago, IL, USA, October 5--6, 2007 and Vancouver, BC,
  Canada, October 4--5, 2008, Providence, RI: American Mathematical Society
  (AMS), 2010, pp.~1--39.

\bibitem{CoMeMiRa-17}
{\sc J.~Courtiel, S.~Melczer, M.~Mishna, and K.~Raschel}, {\em Weighted lattice
  walks and universality classes}, J. Comb. Theory, Ser. A, 152 (2017),
  pp.~255--302.

\bibitem{DB-86}
{\sc R.~D. DeBlassie}, {\em Exit times from cones in {{\({\mathbb{R}}^ n\)}} of
  {Brownian} motion}, Probab. Theory Relat. Fields, 74 (1986), pp.~1--29.

\bibitem{DeWa-10}
{\sc D.~Denisov and V.~Wachtel}, {\em Conditional limit theorems for ordered
  random walks}, Electron. J. Probab., 15 (2010), pp.~292--322.
\newblock Id/No 11.

\bibitem{DeWa-15}
\leavevmode\vrule height 2pt depth -1.6pt width 23pt, {\em Random walks in
  cones}, Ann. Probab., 43 (2015), pp.~992--1044.

\bibitem{De-18}
{\sc V.~Despax}, {\em On some lattice random walks conditioned to stay in
  {Weyl} chambers}, Markov Process. Relat. Fields, 24 (2018), pp.~733--758.

\bibitem{Du-68}
{\sc R.~J. Duffin}, {\em Potential theory on a rhombic lattice}, J.
  Combinatorial Theory, 5 (1968), pp.~258--272.

\bibitem{DuRaTaWa-22}
{\sc J.~Duraj, K.~Raschel, P.~Tarrago, and V.~Wachtel}, {\em Martin boundary of
  random walks in convex cones}, Ann. Henri Lebesgue, 5 (2022), pp.~559--609.

\bibitem{EiKo-08}
{\sc P.~Eichelsbacher and W.~K{\"o}nig}, {\em Ordered random walks}, Electron.
  J. Probab., 13 (2008), pp.~1307--1336.

\bibitem{Fe-14}
{\sc T.~Feierl}, {\em Asymptotics for the number of walks in a {Weyl} chamber
  of type {{\(B\)}}}, Random Struct. Algorithms, 45 (2014), pp.~261--305.

\bibitem{Fe-18}
\leavevmode\vrule height 2pt depth -1.6pt width 23pt, {\em Asymptotics for the
  number of zero drift reflectable walks in a {Weyl} chamber of type {A}}.
\newblock Preprint, {arXiv}:1806.05998 [math.{CO}] (2018), 2018.

\bibitem{Fe-44}
{\sc J.~Ferrand}, {\em Fonctions pr\'eharmoniques et fonctions
  pr\'eholomorphes}, Bull. Sci. Math. (2), 68 (1944), pp.~152--180.

\bibitem{Fi-84}
{\sc M.~E. Fisher}, {\em Walks, walls, wetting, and melting}, J. Stat. Phys.,
  34 (1984), pp.~667--730.

\bibitem{GeZe-92}
{\sc I.~M. Gessel and D.~Zeilberger}, {\em Random walk in a {Weyl} chamber},
  Proc. Am. Math. Soc., 115 (1992), pp.~27--31.

\bibitem{GoHuRa-25}
{\sc L.~Gohier, E.~Humbert, and K.~Raschel}, {\em Enumeration of walks in
  multidimensional orthants and reflection groups}.
\newblock Preprint, {arXiv}:2501.05654 [math.{PR}] (2025), 2025.

\bibitem{GrMa-93}
{\sc D.~J. Grabiner and P.~Magyar}, {\em Random walks in {Weyl} chambers and
  the decomposition of tensor powers}, J. Algebr. Comb., 2 (1993),
  pp.~239--260.

\bibitem{Hu-90}
{\sc J.~E. Humphreys}, {\em Reflection groups and {Coxeter} groups}, vol.~29 of
  Camb. Stud. Adv. Math., Cambridge etc.: Cambridge University Press, 1990.

\bibitem{KoOCRo-02}
{\sc W.~K{\"o}nig, N.~O'Connell, and S.~Roch}, {\em Non-colliding random walks,
  tandem queues, and discrete orthogonal polynomial ensembles}, Electron. J.
  Probab., 7 (2002), p.~24.
\newblock Id/No 5.

\bibitem{KoSc-10}
{\sc W.~K{\"o}nig and P.~Schmid}, {\em Random walks conditioned to stay in
  {Weyl} chambers of type {C} and {D}}, Electron. Commun. Probab., 15 (2010),
  pp.~286--296.

\bibitem{MiSi-20}
{\sc M.~Mishna and S.~Simon}, {\em The asymptotics of reflectable weighted
  walks in arbitrary dimension}, Adv. Appl. Math., 118 (2020), p.~18.
\newblock Id/No 102043.

\bibitem{Mu-65}
{\sc B.~H. Murdoch}, {\em Rates of growth of preharmonic functions}, J. Lond.
  Math. Soc., 40 (1965), pp.~605--618.

\bibitem{Ne-24}
{\sc A.~Nessmann}, {\em Full asymptotic expansion for orbit-summable quadrant
  walks and discrete polyharmonic functions}, Eur. J. Comb., 122 (2024), p.~24.
\newblock Id/No 104015.

\bibitem{Ra-11}
{\sc K.~Raschel}, {\em Green functions for killed random walks in the {Weyl}
  chamber of {{\(\mathrm{Sp}(4)\)}}}, Ann. Inst. Henri Poincar{\'e}, Probab.
  Stat., 47 (2011), pp.~1001--1019.

\bibitem{Ra-14}
\leavevmode\vrule height 2pt depth -1.6pt width 23pt, {\em Random walks in the
  quarter plane, discrete harmonic functions and conformal mappings},
  Stochastic Processes Appl., 124 (2014), pp.~3147--3178.

\bibitem{Va-99}
{\sc N.~T. Varopoulos}, {\em Potential theory in conical domains}, Math. Proc.
  Camb. Philos. Soc., 125 (1999), pp.~335--384.

\end{thebibliography}

\end{document}